\numberwithin{equation}{section}
\newtheorem{theorem}{Theorem}[section]
\newtheorem{proposition}[theorem]{Proposition}
\newtheorem{lemma}[theorem]{Lemma}
\newtheorem{question}[theorem]{Question}
\newtheorem{conjecture}[theorem]{Conjecture}
\theoremstyle{remark}
\newtheorem{definition}[theorem]{Definition}
\newtheorem{example}[theorem]{Example}
\newtheorem{remark}[theorem]{Remark}
\newcounter{FNC}[page]
\def\fauxfootnote#1{{\addtocounter{FNC}{2}\Magenta{$^\fnsymbol{FNC}$}%
     \let\thefootnote\relax\footnotetext{\Magenta{$^\fnsymbol{FNC}$#1}}}}
\def\cprime{$'$}
\newcommand{\longtwoheadrightarrow}{\ensuremath{\relbar\joinrel\twoheadrightarrow}}
\newcommand{\CC}{{\mathbb C}}
\newcommand{\RR}{{\mathbb R}}
\newcommand{\NN}{{\mathbb N}}
\newcommand{\PP}{{\mathbb P}}
\newcommand{\TT}{{\mathbb T}}
\newcommand{\UU}{{\mathbb U}}
\newcommand{\ZZ}{{\mathbb Z}}
\newcommand{\bx}{{\bf x}}
\newcommand{\calV}{{\mathcal V}}
\newcommand{\calU}{{\mathcal U}}
\newcommand{\scrE}{\mathscr{E}}
\newcommand{\scrL}{\mathscr{L}}
\newcommand{\scrA}{\mathscr{A}}
\newcommand{\coscrA}{{\it co}\mathscr{A}}
\DeclareMathOperator{\Log}{{\rm Log}}
\DeclareMathOperator{\Arg}{{\rm Arg}}
\DeclareMathOperator{\Hom}{{\rm Hom}}
\newcommand{\defcolor}[1]{\Blue{#1}}
\newcommand{\demph}[1]{\defcolor{{\sl #1}}}
\title{Describing Amoebas}
\author{Mounir Nisse}
\address{Mounir Nisse\\
   Department of Mathematics\\
   Xiamen University Malaysia\\
  }
\email{mounir.nisse@gmail.com}
\author{Frank Sottile}
\address{Frank Sottile\\
         Department of Mathematics\\
         Texas A\&M University\\
         College Station\\
         Texas \ 77843\\
         USA}
 \email{sottile@math.tamu.edu}
\urladdr{www.math.tamu.edu/\~{}sottile}
\thanks{Research of Sottile supported in part by NSF grant DMS-1501370.}
\thanks{Both authors thank the Institute Mittag-Leffler, where this was conceived and written}
\subjclass[2010]{14T05, 32A60}
\keywords{Amoeba, Coamoeba, Tropical Geometry}
\begin{document}

\begin{abstract}
 An amoeba is the image of a subvariety of an algebraic torus under the logarithmic moment map.
 We consider some qualitative aspects of amoebas, establishing results and posing problems for further study.
 These problems include determining the dimension of an amoeba, describing an amoeba as a semi-algebraic set, and
 identifying varieties whose amoebas are a finite intersection of amoebas of hypersurfaces.
 We show that an amoeba which is not of full dimension is not such a finite intersection if its variety is nondegenerate and
 we describe amoebas of lines as explicit semi-algebraic sets.
\end{abstract}

\maketitle

\section{Introduction}

Hilbert's basis theorem implies that an algebraic variety is the intersection of finitely many hypersurfaces.
A tropical variety is the intersection of finitely many tropical hypersurfaces~\cite{BJSST,HT}.
These results not only provide a finite description of classical and tropical varieties, but they 
are important algorithmically, for they allow these varieties to be represented and manipulated on a computer.
Amoebas and coamoebas are other objects from tropical geometry that are intermediate between classical and tropical
varieties, but less is known about how they may be represented.

The amoeba of a subvariety $V$ of a torus $(\CC^\times)^n$ is its image under the logarithmic moment map
$(\CC^\times)^n\to\RR^n$~\cite[Ch.~6]{GKZ}. 
The coamoeba is the set of arguments in $V$.
An early study of amoebas~\cite{Th02} discussed their computation.
Purbhoo~\cite{Purbhoo} showed that the amoeba of a variety $V$ is the intersection of amoebas of all hypersurfaces
containing $V$ and that points in the complement of its amoeba are witnessed by certain lopsided polynomials in 
its ideal.
This led to further work on approximating amoebas~\cite{TdW}.
Schroeter and de Wolff~\cite{SdW}, and then Nisse~\cite{Nisse} showed that the amoeba of zero-dimensional variety is an
intersection of finitely many hypersurface amoebas.
More interestingly, Goucha and Gouveia~\cite{GG} showed that the amoeba of the variety of $m\times n$ matrices of rank less than
$\min\{m,n\}$ is the (finite) intersection of the hypersurface amoebas given by the maximal minors. 

A subvariety of the torus has a finite amoeba basis if its amoeba is the intersection of finitely many hypersurface  
amoebas. 
This property is preserved under finite union. 
We show that a complete intersection of polynomials whose Newton polytopes are affinely independent has a finite amoeba
basis.
Despite this and the results in~\cite{GG,Nisse,SdW}, we expect it is rare for a variety to have a finite amoeba basis.
Indeed, we show that if a subvariety $V\subset(\CC^\times)^n$ has an amoeba of dimension less than
$n$ and a finite amoeba basis, then each component of $V$ lies in some translated subtorus.

Determining which varieties admit a finite amoeba basis will require a better understanding of  amoebas, for which we suggest two
problems.
In  Section~\ref{S:dimension}, we identify a structure of a variety $V$ which implies that it has dimension
smaller dimension than expected, $\min\{n,2\dim_\CC V\}$.
We originally conjectured that this condition was necessary, which was
subsequently proven by Draisma, Rau, and Yuen~\cite{DRY}.
When the amoeba of $V$ has the minimal possible dimension, we show that $V$ is a single orbit of a subtorus.
It remains an open problem to develop methods to recognize when a variety has this structure.
The second problem asks for a description of the coamoeba and algebraic amoeba (the projection of $V$ to $\RR_>^n$) of a
variety $V$ as semi-algebraic sets. 
Both questions, dimension and description as semi-algebraic sets are open and interesting for linear subspaces.
We exhibit such a description for amoebas of lines.

We give some background in Section~\ref{S:background}.
In Section~\ref{S:SA} we observe that coamoebas and algebraic amoebas are semi-algebraic sets and describe the algebraic
amoeba of a line as a semi-algebraic set.
Section~\ref{S:dimension} considers the problem of determining the dimension of an amoeba, and in Section~\ref{S:not_finite} we study when a
variety has a finite amoeba basis. 

We thank Timo de Wolff, Avgust Tsikh, Ilya Tyomkin, Jan Draisma, and Jo\~{a}o
Gouveia for stimulating conversations.
We also thank the Institute Mittag-Leffler for its hospitality during the writing of this manuscript.

\section{Amoebas and coamoebas}\label{S:background}

A point $z$ in  the group \defcolor{$\CC^\times$} of nonzero complex numbers
is determined by its absolute value \defcolor{$|z|$} and its argument \defcolor{$\arg(z)$}.
These are group homomorphisms that identify $\CC^\times$  with the product $\RR_>\times\UU$,
where $\defcolor{\RR_>}\subset\CC^\times$ is its subgroup of positive real numbers and $\defcolor{\UU}\subset\CC^\times$ is
its subgroup of unit complex numbers. 
The decomposition $\CC^\times\simeq\RR_>\times\UU$ induces 
a decomposition of the complex torus $(\CC^\times)^n\simeq\RR_>^n\times\UU^n$.
We consider the projections
$|\cdot|\colon(\CC^\times)^n\to\RR_>^n$ and $\Arg\colon(\CC^\times)^n\to\UU^n$ to each factor.
Let $\defcolor{\Log}\colon(\CC^\times)^n\to\RR^n$ be the composition of the projection $|\cdot|$ with the coordinatewise
logarithm.
This is called the logarithmic moment map.

A subvariety  $V\subset(\CC^\times)^n$ of the torus is the set of zeroes of finitely many Laurent polynomials.
It is a \demph{hypersurface} when it is given by a single polynomial, and a \demph{complete intersection}
if it is given by $r=n{-}\dim V$ polynomials.
The \demph{amoeba} $\defcolor{\scrA(V)}\subset\RR^n$ of a subvariety $V\subset(\CC^\times)^n$ of the
torus is its image under $\Log$ and its \demph{coamoeba}  $\defcolor{\coscrA(V)}\subset \UU^n$ is its image under $\Arg$. 
Gelfand, Kapranov, and Zelevinsky defined amoebas~\cite[Ch.~6]{GKZ} and coamoebas first appeared in a 2004 lecture
of Passare. 
The \demph{algebraic amoeba} $\defcolor{|V|}\subset\RR_>^n$ of a subvariety $V$ is its image under
the projection $(\CC^\times)^n\to \RR_>^n$.
Because $\RR_>^n\simeq \RR^n$ under the logarithm and exponential maps, $|V|\simeq\scrA(V)$ as analytic subsets of their
respective spaces.

As the map $\CC^\times\to\RR_>$ is proper, the maps $(\CC^\times)^n\to\RR_>^n$ and
$(\CC^\times)^n\to\RR^n$ are proper, and therefore algebraic amoebas and regular amoebas are closed subsets of
$\RR_>^n$ and $\RR^n$, respectively. 
For a single Laurent polynomial $f$, write \defcolor{$\scrA(f)$} for the amoeba of the hypersurface 
\defcolor{$\calV(f)$} given  by $f$.
This is a \demph{hypersurface amoeba}.
Each component of the complement $\RR^n\smallsetminus\scrA(f)$ of a hypersurface amoeba is an open convex set~\cite[Cor.~6.1.6]{GKZ}.

The structure of $(\CC^\times)^n$ is controlled by its group $\ZZ^n\simeq \defcolor{N}:=\Hom(\CC^\times,(\CC^\times)^n)$ of
cocharacters and dual group of characters $\ZZ^n\simeq \defcolor{M}:=\Hom((\CC^\times)^n,\CC^\times)$.
For example, Laurent polynomials are linear combinations of characters.
Both $(\CC^\times)^n$ and $\RR^n\simeq N\otimes_\ZZ\RR$ have related structures.
A subtorus $\TT\subset(\CC^\times)^n$ corresponds to a saturated subgroup $\Pi\subset N$ ($\Pi$ is the set of
cocharacters of $\TT=\Pi\otimes_\ZZ\CC^\times$), as well as to a rational linear subspace
$\defcolor{\Pi_\RR}:=\Pi\otimes_\ZZ\RR$ of $\RR^n$, which is the amoeba of $\TT$.
All rational linear subspaces of $\RR^n$ arise in this manner.

A translate \defcolor{$a\TT$} of a subtorus $\TT$ by an element $a\in(\CC^\times)^n$ is an \demph{affine subtorus}.
A \demph{rational affine subspace} is the amoeba of an affine subtorus, equivalently, it is a translate of a rational linear
subspace.  
A subvariety $V\subset(\CC^\times)^n$ is \demph{degenerate} if it lies in a proper affine subtorus.
Otherwise it is \demph{nondegenerate}.
In Section~\ref{S:not_finite}, we observe that a variety $V$ is degenerate if and only if its amoeba lies in a proper
rational affine subspace of $\RR^n$.

The \demph{logarithmic limit set} $\defcolor{\scrL^\infty(V)}$ of a variety $V\subset(\CC^\times)^n$ is the set of
asymptotic directions of its amoeba, that is, the set of accumulation points of sequences 
$\bigl\{ \frac{z_m}{\|z_m\|}\bigr\}\subset S^{n-1}$ where  
$\{z_m\mid m\in\NN\}\subset\scrA(V)$ is unbounded.
Fundamental work of Bergman~\cite{Berg} and Bieri-Groves~\cite{BiGr} show that $\scrL^\infty(V)$ is the intersection of the
sphere $S^{n-1}$ with a rational polyhedral fan of pure dimension equal to the dimension of $V$, called the tropical
variety of $V$. 

\section{Amoebas and coamoebas as semi-algebraic sets}\label{S:SA}

Properties of semi-algebraic sets are developed throughout the books~\cite{BPR,BCR}.
A subset $X\subset \RR^m$ is \demph{semi-algebraic} if it is defined by finitely many algebraic equations and
algebraic inequalities.
By the Tarski-Seidenberg Theorem, the image of a semi-algebraic set under a polynomial map is a semi-algebraic set.

The unit complex numbers $\UU$ form a real algebraic variety as they are the fixed points of the anti-holomorphic involution
$z\mapsto\overline{z}^{-1}$ on $\CC^\times$.
This is realized concretely through the unit circle $c^2+s^2=1$ in $\RR^2$.
Indeed, $\CC[c,s]/\langle c^2+s^2-1\rangle \simeq \CC[x,y]/\langle xy-1\rangle$ under the map $x=c+s\sqrt{-1}$ and $y=c-s\sqrt{-1}$, and
$ \CC[x,y]/\langle xy-1\rangle \simeq \CC[x,x^{-1}]$ , which is the coordinate ring of $\CC^\times$.
  
The map $(\RR^\times)^n\times\UU^n \to (\CC^\times)^n$ defined by 
$  (\rho_1,\dotsc,\rho_n,\theta_1,\dotsc,\theta_n) \mapsto (\rho_1\theta_1,\dotsc,\rho_n\theta_n)$ is a $2^n$ to 1 cover, which is an
isomorphism on $(\RR_>)^n\times\UU^n$.
Given a complex algebraic subvariety $V\subset(\CC^\times)^n$, its pullback to $(\RR^\times)^n\times\UU^n$ is a real algebraic
subvariety.
The intersection of this pullback with $(\RR_>)^n\times\UU^n$ is then a semi-algebraic set that is homeomorphic to $V$.
Since the maps $|\cdot|$ and $\Arg$ on $(\RR_>)^n\times\UU^n$ are projection maps, the Tarski-Seidenberg Theorem implies the
following. 

\begin{proposition}
 The algebraic amoeba $|V|$ and the coamoeba $\coscrA(V)$ of an algebraic subvariety $V\subset(\CC^\times)^n$ are
 semi-algebraic subsets of $\RR_>^n$ and $\UU^n$, respectively.
\end{proposition}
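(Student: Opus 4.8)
The plan is to apply the Tarski--Seidenberg theorem directly, once we have observed that all the objects in sight are real-algebraic. First I would note that a subvariety $V\subset(\CC^\times)^n$ is the common zero locus of finitely many Laurent polynomials $f_1,\dots,f_r$; clearing denominators by multiplying by a suitable monomial, we may assume the $f_i$ are ordinary polynomials, so $V$ is cut out by polynomial equations in the complex coordinates $z_1,\dots,z_n$ together with the conditions $z_j\neq 0$. Writing $z_j = x_j + \sqrt{-1}\,y_j$ and separating each $f_i$ into its real and imaginary parts gives $2r$ polynomial equations in the $2n$ real variables $x_j,y_j$, while $z_j\neq 0$ becomes $x_j^2+y_j^2>0$. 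Hence $V$, viewed inside $\RR^{2n}$, is a semi-algebraic (indeed basic) subset.

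Next I would describe the two maps in real-algebraic terms. The projection $(\CC^\times)^n\to\RR_>^n$ sends $(x_j,y_j)_j$ to $(\sqrt{x_j^2+y_j^2})_j$; although the square root is not a polynomial, the graph $\{(x,y,r) : r_j\geq 0,\ r_j^2 = x_j^2+y_j^2\}$ is semi-algebraic, so $|V|$ is the image of the semi-algebraic set $V\times\RR_{>}^n$ intersected with this graph under the coordinate projection to the $r$-coordinates. Similarly $\Arg$ sends $z_j$ to $z_j/|z_j|\in\UU\subset\RR^2$, and its graph $\{(x,y,u,v) : u_j^2+v_j^2 = 1,\ u_j\sqrt{x_j^2+y_j^2} = x_j,\ v_j\sqrt{x_j^2+y_j^2}=y_j\}$ is again semi-algebraic (one can even eliminate the square root by clearing it: $u_j x_j + v_j y_j = \sqrt{x_j^2+y_j^2}\geq 0$ together with $u_j y_j = v_j x_j$), so $\coscrA(V)$ is likewise a projection of a semi-algebraic set.

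Finally I would invoke the Tarski--Seidenberg theorem, as cited in the excerpt (\cite[Ch~2]{BPR}), which says that the image of a semi-algebraic set under a projection --- equivalently, any set defined from semi-algebraic sets by first-order formulas including quantifiers --- is semi-algebraic. Applying this to the two projections just described yields that $|V|\subset\RR_>^n$ and $\coscrA(V)\subset\UU^n\subset\RR^{2n}$ are semi-algebraic, completing the proof.

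I do not expect any genuine obstacle here; the statement is essentially a formal consequence of Tarski--Seidenberg. The only point requiring a little care is that the absolute value and argument maps are not polynomial maps, so one cannot literally apply ``image of a semi-algebraic set under a polynomial map''; the fix, as indicated above, is to replace each map by its graph (which is semi-algebraic because the defining relations involve only polynomials after squaring) and then project. One should also remark that $\UU^n$ is itself a semi-algebraic subset of $\RR^{2n}$, so ``semi-algebraic subset of $\UU^n$'' is unambiguous.
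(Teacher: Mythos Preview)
Your proposal is correct and follows the same approach as the paper, which simply states that $V$ is a real-algebraic subvariety, that $|\cdot|$ and $\Arg$ are real-algebraic maps, and invokes Tarski--Seidenberg. You are in fact more careful than the paper: you explicitly address the point that $|\cdot|$ and $\Arg$ are not polynomial maps by passing to their semi-algebraic graphs before projecting, whereas the paper just calls them ``real-algebraic maps'' without comment.
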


While the amoeba $\scrA(V)$ of a variety is not semi-algebraic, it inherits finiteness and other properties of semi-algebraic sets from the
algebraic amoeba $|V|$.

This brings us to our first question.
We believe that the semi-algebraic nature of amoebas and coamoebas has been neglected.

\begin{question}\label{Q:semialgebraic}
 Given an algebraic subvariety $V\subset(\CC^\times)^n$, produce a description of its algebraic amoeba $|V|$ and coamoeba
 $\coscrA(V)$  as semi-algebraic subsets of $\RR_>^n$ and $\UU^n$.
\end{question}

Later in this section, we will give a description of the algebraic amoeba of a line in $(\CC^\times)^n$.
Besides amoebas of lines, the coamoebas described in~\cite[\S3]{NSnA}, and those coming from
discriminants~\cite{NS_linear,NP,PS}, 
we know of no other instances where such a semi-algebraic description has been given.
We expect that the description of the amoeba of the set of rectangular matrices that do not have full rank as a finite intersection of 
hypersurface amoebas given in~\cite{GG} will lead to a description of these amoebas as semi-algebraic sets.
Below, we show the algebraic amoeba of the parabola $y=(x{-}1)(x{-}2)$ and the hyperbola $y=1+1/(x{-}2)$.
\[
  \begin{picture}(150,105)(-14,0)
   \put(0,0){\includegraphics{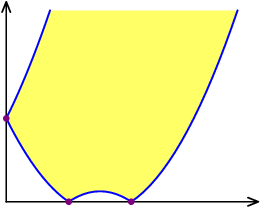}}
   \put(-14,82){$|y|$}   \put(105,10){$|x|$}
  \end{picture}
  \qquad\qquad
   \begin{picture}(127,105)
   \put(0,0){\includegraphics{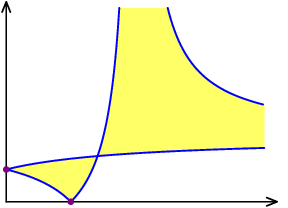}}
   \put(6,82){$|y|$}   \put(115,10){$|x|$}
  \end{picture}
\]
These have easy descriptions as semi-algebraic sets.  
Observe that  the boundaries of these amoebas are the absolute values of the real points of the curves.
This is not always the case.

The remainder of this section is devoted to providing a complete description of the algebraic amoeba of lines.
A \demph{line} is a subvariety $\ell\subset(\CC^\times)^n$ such that $\overline{\ell}\subset\PP^n$ is a linear subspace of dimension 1.
We first study lines in $(\CC^\times)^2$ and in $(\CC^\times)^3$, and then use those results to describe the algebraic amoeba of any
line.

\begin{example}\label{Ex:lineInPlane}
 We describe the algebraic amoeba of a nondegenerate line in $(\CC^\times)^2$.
 Let $a,b,c\in\CC^\times$.
 A point $(|x|,|y|)\in \RR^2_>$ lies in the algebraic amoeba of the line $ax+by+c=0$ if and only if there is a triangle with sides 
 $|a||x|$, $|b||y|$, and $|c|$.
 Equivalently, if $|x|,|y|>0$, 
 \begin{equation}\label{Eq:amoeba_line}
   |a||x|+|b||y|\ \geq\ |c|\,,\qquad\mbox{and}\qquad \bigl| |a||x|-|b||y|\bigr|\ \leq\ |c|\,.
 \end{equation}
 This is the shaded polyhedron shown in Figure~\ref{F:AA_Plane}.
 \hfill{$\diamond$}
\end{example}
\begin{figure}[htb]
 \centering
   \begin{picture}(160,96)(-15,-10)
    \put(0,0){\includegraphics[height=86pt]{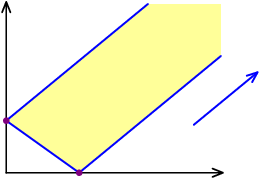}}
    \put(88,8){\small$|x|$}   \put(-11,68){\small$|y|$}
    \put(-10,24.5){$\frac{|c|}{|b|}$}   \put(32,-12){$\frac{|c|}{|a|}$}
    \put(110,31){\small$(|b|,|a|)$}
   \end{picture}
 \caption{Algebraic amoeba of a line in the plane.}\label{F:AA_Plane}
\end{figure}

The closure $\overline{\ell}$ of a nondegenerate line 
$\ell\subset(\CC^\times)^3\subset\PP^3$ meets each of the four coordinate planes in $\PP^3$ in a distinct point.
A line $\ell$ may be identified with $\CC\PP^1$.
A \demph{circle} on $\CC\PP^1$ is any image of $\RR\PP^1$ under a M\"obius transformation.

As explained in a discussion about coamoebas~\cite[\S3]{NS13}, 
if the four points of $\overline{\ell}\smallsetminus\ell$ lie on a circle, then after a reparameterization the line is
real (defined or parameterized by real polynomials), with points in the complement of the circle mapped two-to-one to the amoeba and
those on the circle mapped one-to-one to the relative boundary of the amoeba. 
If the four points do not lie on a circle, then the map from $\ell$ to its amoeba has no critical
points~\cite[Lem.~7]{NS13}.
This description is independent of parameterization of $\ell$.

\begin{lemma}\label{L:real_line}
  The algebraic amoeba $|\ell|$ of a nondegenerate real line $\ell\subset(\CC^\times)^3$ consists of the points on a symmetric quadratic
  hypersurface $Q\subset\RR^3$ of one sheet that lie in the positive octant,  $x,y,z>0$, and that also satisfy the linear triangle
  inequalities~\eqref{Eq:amoeba_line} arising from the projections of $\ell$ to each of the three coordinate planes.
  An explicit description of $Q$ is given in the proof below.
\end{lemma}
\begin{proof}
 We prove this by a direct calculation.
 We may assume that a nondegenerate real line $\ell\subset(\CC^\times)^3$ is given by a map
\[
   t\ \longmapsto\ (t, a_2(t-b_2), a_3(t-b_3))\,,
\] 
where $a_2,a_3,b_2,b_3\in\RR^\times$ with $b_2\neq b_3$.
If we set $t=p+q\sqrt{-1}$ for $(p,q)\in\RR^2$, then the algebraic amoeba is a dense subset of the image of
the map 
\[
  \RR^2\ni(p,q)\ \longmapsto\ 
  \Bigl(\,\sqrt{p^2+q^2}\,, \ 
     \sqrt{a_2^2(p^2+q^2-2b_2p+b_2^2)}\,, \ 
     \sqrt{a_3^2(p^2+q^2-2b_3p+b_3^2)}\, \Bigr)\ .
\]
 Letting $x,y,z$ be the coordinates for $(\RR_\geq)^3$ and squaring gives
\[
  x^2=p^2+q^2\,,\ \ 
  y^2=a_2^2(p^2+q^2-2b_2p+b_2^2)\,,\ \mbox{ and }\ 
  z^2=a_3^2(p^2+q^2-2b_3p+b_3^2)\,.
\]
 Eliminating $p$ and $q$ gives the quadratic equation
\[
  (b_2-b_3)x^2 \ +\ 
  \frac{b_3}{a_2^2}y^2 \ -\ \frac{b_2}{a_3^2}z^2 \ +\ 
  b_2b_3(b_2-b_3)\ =\ 0\,,
\]
 that is satisfied by points on the algebraic amoeba.
 It is symmetric about the coordinate planes, as it is linear in $x^2,y^2,z^2$.
 Since projection to any coordinate ($(x,y)$-, $(x,z)$-, or $(y,z)$-) plane in $(\CC^\times)^3$ gives a line, the corresponding 
 inequalities~\eqref{Eq:amoeba_line} must also be satisfied.
\end{proof}

\begin{example}\label{Ex:real_line}
 The algebraic amoeba of the real line with parameterization $t\mapsto(t,t{+}1,t{-}1)$ is displayed on the left in
 Figure~\ref{F:AA}. 
 It is the set of shaded points on the quadric $Q$ defined by $2x^2-y^2-z^2+2=0$, which are those points satisfying $x,y,z>0$, 
\[
   |x+y|\geq 1\,,\qquad\mbox{and}\quad |x-y|\leq 1\,.
\]
 These are the inequalities~\eqref{Eq:amoeba_line} from its projection to $(x,y)$-plane, which is the algebraic amoeba of
 the projection of $\ell$ to the corresponding coordinate $(\CC^\times)^2$.
 Interestingly, only one such coordinate projection is needed, which we invite the reader to check.
 (Note that the tangent to the quadric at $a=(1,2,0)$ is vertical, and that the line segments of its boundary lie along lines in the two
 rulings of $Q$.)
 \hfill$\diamond$
\end{example}

\begin{figure}[htb]
\centering
  \begin{picture}(115,110)(0,-7)
    \put(0,0){\includegraphics[height=100pt]{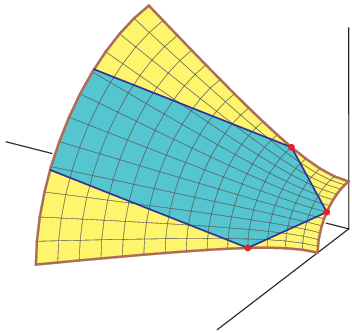}}
    \put(3,60){$x$}  \put(70,-2){$y$}    \put(96,84){$z$}
    \put(69,16){$a$}
   \end{picture}
\qquad
 \begin{picture}(112,110)(0,-5)
   \put(0,0){\includegraphics[height=100pt]{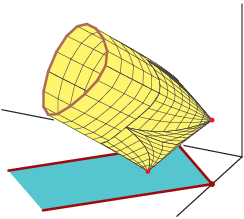}}
      \put(3,53){$x$}  \put(88,2){$y$}    \put(103,92){$z$}
 \end{picture}
\quad
  \begin{picture}(124,110)
   \put(0,0){\includegraphics[height=110pt]{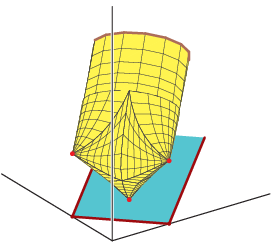}}
      \put(112,27){$x$}  \put(4,35){$y$}    \put(43,101){$z$}
  \end{picture}
 \caption{Algebraic amoebas of real and complex lines in $(\CC^\times)^3$.}
 \label{F:AA}
\end{figure}

\begin{lemma}\label{L:complex_line}
  The algebraic amoeba $|\ell|$ of a nondegenerate complex line $\ell\subset(\CC^\times)^3$ consists of the points on a quartic hypersurface
  in the positive octant.
  An explicit equation for the quartic may be recovered from the proof below.
\end{lemma}
\begin{proof}
 Now consider a complex line $\ell\subset(\CC^\times)^3$.
 Its closure $\overline{\ell}$ in $\PP^3$ is identified with $\PP^1$.
 After a change of coordinates, we may assume that $\overline{\ell}\smallsetminus \ell=\{0,1,\infty, \alpha\}$, where $\alpha$ is not real.
 Then $\ell$ has a parameterization
 \begin{equation}\label{Eq:parameterization}
     t\ \longmapsto\  ( t, c(t-1), d(t-\alpha))\ ,
 \end{equation}
 for some $c,d\in\CC^\times$.
 Rescaling the last two coordinates (dividing by $c$ and $d$, respectively) 
 the parameterization for $\ell$ becomes $t\mapsto(t,t{-}1,t{-}\alpha)$.
 Writing $\alpha=-a{-}b\sqrt{-1}$ with $a,b\in\RR$ and $b\neq 0$, if $(x,y,z)$ is the point on $|\ell|$ corresponding to
 $t=p{+}q\sqrt{-1}$, then  
\[
   x^2\ =\ p^2+q^2\,,\ \ 
   y^2\ =\ (p-1)^2+q^2\,,\ \quad\mbox{ and }\ 
   z^2\ =\ (p+a)^2+(q+b)^2\,.
\]
 Then we have 
\[
  ay^2+z^2 \ -\ (a{+}1)x^2 -a^2{-}b^2{-}a\ =\ 2bq
  \quad\mbox{ and }\quad
  x^2-y^2+1\ =\ 2p\,.
\]
 It follows that 
 \begin{equation}\label{Eq:quartic}
  (ay^2+z^2  - (a{+}1)x^2 -a^2-b^2-a)^2\ +\ 
  b^2(x^2-y^2+1)^2\ -\ 4b^2x^2\ =\ 0\,.
 \end{equation}
 Thus $|\ell|$ lies on the part of the quartic surface $\defcolor{Q}\subset\RR^n$ defined by~\eqref{Eq:quartic} 
 in the positive orthant. 
 Write \defcolor{$Q^+$} for this positive part.

 We claim that $Q^+=|\ell|$.
 As~\eqref{Eq:quartic} contains only even powers of $z$,
 the projection of the quartic $Q$ to the $(x,y)$-plane factors through the map $(x,y,z)\mapsto (x,y,z^2)$.
 The image of $Q$ is a surface \defcolor{$S$} on which $\defcolor{w}:=z^2$ is a quadratic function of $x$ and $y$ with 
 discriminant 
\[
  \Delta\ =\ -4b^2(x-y-1)(x-y+1)(x+y-1)(x+y+1)\,.
\]
 In the quadrant where $x,y> 0$, this discriminant is nonnegative on the polyhedron $P$ defined by $|x+y|\geq 1$ and
 $|x-y|\leq 1$, which is the algebraic amoeba of the projection of $\ell$ to the $(x,y)$-plane
 (this projection is defined by $x-y-1=0$).
 Thus the surface $S$ has two branches above points in the interior of $P$ (where the discriminant is positive).
 Note that every point of $S$ with a positive third coordinate $w$ gives two points $\pm\sqrt{w}$ on $Q$ with exactly one
 on $Q^+$. 
 We claim that $w$ is nonnegative on both branches of $S$ and thus that $Q^+$ has two branches (and $Q$ has four branches)
 above $P$. 
 
 For this, we show that $|\ell|\subset Q^+$ has two points over all interior points of $P$, except $(|\alpha|,|\alpha-1|)$.
 Then the composition $\ell\to |\ell|\subset Q^+ \to P$, together with~\cite[Lem.~7]{NS13} which asserts that 
 $\ell \to |\ell|$ has no critical points, shows that $|\ell|=Q^+$.

 Indeed, let $t\in\CC\smallsetminus\RR$.  Then $t= p+q\sqrt{-1}$ with $q\neq 0$, and we have
 \begin{eqnarray*}
  |\ell(t)| &=&  \left(\sqrt{p^2+q^2}\,,\, \sqrt{(p-1)^2+q^2}\,,\, \sqrt{(a+p)^2+(b+ q)^2} \right)\\
  |\ell(\overline{t})| &=&  \left(\sqrt{p^2+q^2}\,,\, \sqrt{(p-1)^2+q^2}\,,\, \sqrt{(a+p)^2+(b- q)^2} \right)\,,
 \end{eqnarray*}
 As the first two coordinates of both coincide, these have the same image in the interior of $P$, but different third coordinates.
 When $t=\alpha$ or $t=\overline{\alpha}$, one of these has third coordinate $0$ and does not lie on $|\ell|$.
 (For $t=\alpha,\overline{\alpha}$, both expressions have the same image $(|\alpha|,|\alpha-1|)$.
 Thus $|\ell|=Q^+$.
\end{proof}

The equation~\eqref{Eq:quartic} describes the Zariski closure of $|\ell|$ when the last two coordinates in~\eqref{Eq:parameterization}
are rescaled.
If we did not rescale these coordinates, the equation~\eqref{Eq:quartic} becomes
\[
  (a\tfrac{1}{c\overline{c}}y^2+\tfrac{1}{d\overline{d}}z^2  - (a{+}1)x^2 -a^2-b^2-a)^2\ +\ 
  b^2(x^2-\tfrac{1}{c\overline{c}}y^2+1)^2\ -\ 4b^2x^2\ =\ 0\,.
\]

\begin{example}\label{Ex:complex_line}
 Let $\zeta$ be a primitive third root of unity.
 The algebraic amoeba of the symmetric complex line with parameterization $t\mapsto (t{-}1,t{-}\zeta,t{-}\zeta^2)$ is
\[ 
    x^4+y^4+z^4\,-\, (x^2y^2+x^2z^2+y^2z^2)\, -\, 3(x^2+y^2+z^2)\ +\ 9\ =\ 0\,.
\]
 Its closure meets the coordinate planes in the (singular) points 
 $(\sqrt{3},\sqrt{3},0)$,  $(\sqrt{3},0,\sqrt{3})$, and  $(0,\sqrt{3},\sqrt{3})$,
 and its projection to the $(x,y)$-plane is determined by the inequalities 
 $|x+y|\geq 2\sqrt{3}$ and $|x-y|\leq 2\sqrt{3}$.
 Two views of this algebraic amoeba and its projection to the $(x,y)$-plane are shown on the right in Figure~\ref{F:AA}. 
 \hfill{$\diamond$}
\end{example}

We now describe the algebraic amoeba of an arbitrary line $\ell$ in $(\CC^\times)^n$.
Let $\defcolor{\scrE_\ell}:=\overline{\ell}\smallsetminus\ell$ be the intersection of $\overline{\ell}$ with the
coordinate planes $\PP^n\smallsetminus(\CC^\times)^n$, this is its set of \demph{ends}.
Note that $|\scrE_\ell|\geq 2$.
Recall that a circle on $\PP^1$ is any image of $\RR\PP^1$ under a M\"obius transformation.
The line $\ell$ is \demph{real} if $\scrE_\ell\subset\overline{\ell}\simeq\PP^1$ lies on a circle and \demph{complex} if
$\scrE_\ell$ does not lie on a circle.

\begin{lemma}\label{L:linear}
 A line $\ell\subset(\CC^\times)^n$ lies on an affine subtorus $a\TT_\ell$ of dimension $|\scrE_\ell|{-}1$ whose closure is a
 linear subspace of\/ $\PP^n$.
\end{lemma}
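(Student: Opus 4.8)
The plan is to use the explicit parametrization of the line $\ell$ coming from its closure $\overline{\ell}\cong\PP^1$ and to read off from it a subtorus on which $\ell$ lies. Since $\overline{\ell}\subset\PP^n$ is a line, we may write it as the image of a linear embedding $\PP^1\to\PP^n$, and restricting to the open part we get a parametrization $t\mapsto(\ell_1(t),\dots,\ell_n(t))$ of $\ell$, where each coordinate $\ell_i$ is a ratio of linear forms in $t$ (equivalently, a Möbius function of $t$), and the finitely many zeros and poles of the $\ell_i$ among $t\in\PP^1$ are exactly the points of $\scrE_\ell=\overline{\ell}\smallsetminus\ell$. Write $e_0,\dots,e_k$ for these points, where $k+1=|\scrE_\ell|$, so $k\geq 1$. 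After an automorphism of $\PP^1$ we may assume $e_0,\dots,e_k$ are at prescribed positions; the essential point is that each $\ell_i$ is, up to a nonzero constant, a product $\prod_j (t-e_j)^{m_{ij}}$ of the affine factors corresponding to those ends, with integer exponents $m_{ij}\in\ZZ$ (positive for a zero, negative for a pole), and $\sum_j m_{ij}$ bounded by degree considerations.

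Next I would package these exponents into a map. Let $M=(m_{ij})$ be the $n\times(k+1)$ integer matrix, and consider the group homomorphism $\phi\colon(\CC^\times)^{k+1}\to(\CC^\times)^n$ sending $(s_0,\dots,s_k)$ to the point with $i$-th coordinate $\prod_j s_j^{m_{ij}}$. Because the affine factors $(t-e_j)$ satisfy exactly one linear relation among themselves on $\PP^1$ (any two of the $k+1$ quantities $t-e_j$ determine the rest affinely — this is the statement that $\PP^1$ minus $k+1$ points has $k+1$ ``coordinates'' subject to one relation), the actual values $(t-e_0,\dots,t-e_k)$ as $t$ ranges over $\ell$ sweep out a $1$-dimensional affine subtorus of $(\CC^\times)^{k+1}$, namely a coset of the $1$-parameter subgroup. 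Pushing forward under $\phi$, the curve $\ell$ lands in the translate $a\TT_\ell$ where $\TT_\ell:=\phi(\text{rank-}k\text{ subtorus})$ and $a=\phi(\text{basepoint})$; one checks $\dim\TT_\ell=k=|\scrE_\ell|-1$ provided $M$ has full rank $k$ on the relevant sublattice, which follows from nondegeneracy is not needed here — it follows because if the rank dropped then $\overline{\ell}$ would lie in a coordinate hyperplane of $\PP^n$ or the $\ell_i$ would be algebraically dependent in a way incompatible with $\overline{\ell}$ being a line meeting the coordinate planes in $k+1$ distinct points. Finally, the closure of $\TT_\ell$ (indeed of $a\TT_\ell$) in $\PP^n$ is a linear subspace: monomial parametrizations of the above shape, coming from a linear $\PP^1\hookrightarrow\PP^n$, have closures that are linearly embedded, because $\overline{\ell}$ is already a line and $a\TT_\ell$ is the smallest affine subtorus containing it, whose closure is cut out by binomials that are restrictions of the linear equations defining $\overline{\ell}$.

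The main obstacle I expect is bookkeeping the exponent matrix $M$ correctly and verifying that its rank is exactly $k=|\scrE_\ell|-1$ — not more (it can't be, since there are only $k+1$ ends subject to one relation) and not less. The ``not less'' direction is where one must use that the $n$ coordinate functions $\ell_i$ together separate points of $\ell$ and that $\overline{\ell}$ genuinely meets $k+1$ distinct coordinate planes; if the rank were smaller, two of the ends would be forced to coincide or $\ell$ would factor through a lower-dimensional torus in a way that collapses ends. Once the rank is pinned down, identifying $\TT_\ell$ with the saturated subgroup $\Pi\subset N$ generated by the columns of $M$ (equivalently, the image lattice) and invoking the correspondence from Section~\ref{S:background} between subtori and rational linear subspaces gives the dimension count, and the linearity of the closure is then immediate from the shape of the parametrization.
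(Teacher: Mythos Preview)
Your plan has a genuine gap at the most delicate point: showing that the closure of $a\TT_\ell$ in $\PP^n$ is a \emph{linear} subspace. In general the closure of an affine subtorus in $\PP^n$ is a toric variety cut out by binomials of arbitrary degree, not a linear space. Your sentence ``whose closure is cut out by binomials that are restrictions of the linear equations defining $\overline{\ell}$'' does not establish this: most of the $n{-}1$ linear equations cutting out $\overline{\ell}$ are not binomials, and you have not identified which ones are, nor why the binomial equations of the subtorus you construct via $\phi$ should happen to have degree one. The rank claim for $M$ is likewise asserted rather than shown.

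The paper's proof is far more direct and sidesteps the whole apparatus. It groups coordinates by which end they meet: if a single end of $\ell$ lies on the coordinate hyperplanes $x_{i_0}=0,\dots,x_{i_r}=0$, then projecting $\overline{\ell}$ to those coordinates gives a line through the origin of $\CC^{r+1}$, so on $\ell$ one has $x_{i_j}=a_j x_{i_0}$ for some $a_j\in\CC^\times$. These relations are simultaneously linear and binomial, so they cut out an affine subtorus whose projective closure is manifestly a linear subspace. Summing over the ends gives $(n{+}1)-|\scrE_\ell|$ independent such relations, hence dimension $|\scrE_\ell|{-}1$. In your exponent-matrix language this amounts to observing that each row of $M$ is $e_{\sigma(i)}-e_{\sigma(0)}$ for the surjection $\sigma$ assigning to each of the $n{+}1$ coordinate hyperplanes its unique end; the relations among rows are then exactly $\mbox{row}_i=\mbox{row}_j$ when $\sigma(i)=\sigma(j)$, which recovers the linear binomials $x_i=c\,x_j$ and makes both the rank and the linearity transparent---but at that point the parametrization and $\phi$ are no longer doing any work.
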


\begin{proof}
 Since $\PP^n\smallsetminus(\CC^\times)^n$ consists of $n{+}1$ coordinate planes and $\ell$ meets each coordinate plane in a point of
 $\scrE_\ell$, any difference $n{+}1{-}|\scrE_\ell|$ comes from points of $\scrE_\ell$ lying on more than one coordinate plane.
 
 Suppose that a point of $\scrE_\ell$ lies in two coordinate planes of $\PP^n$, say $x=0$ and $y=0$.
 These coordinates are characters of $(\CC^\times)^n$ and they give a coordinate projection to $(\CC^\times)^2$.
 The image of $\ell$ in the $\CC^2$ containing this $(\CC^\times)^2$ is a line $\ell'$ passing through the origin.
 Thus $\ell'$ (and therefore $\ell$) satisfies an equation $y=ax$ for some $a\in\CC^\times$.

 Given a point of $\scrE_\ell$ lying on two or more coordinate planes, let $x_{i_0}=0, \dotsc, x_{i_r}=0$ be those
 coordinate planes.
 These characters $x_{i_j}$ give a coordinate projection to $(\CC^\times)^{r+1}$.
 The image of $\ell$, and thus $\ell$ itself, satisfies an equation $x_{i_j}=a_jx_{i_0}$ for some $a_j\in\CC^\times$,
 for each $j=1,\dotsc,r$.
 These equations for all ends $\scrE_\ell$ of $\ell$
 give $n{+}1{-}|\scrE_\ell|$ independent linear equations that define an affine subtorus $a\TT_\ell$ of dimension
 $|\scrE_\ell|{-}1$ that contains $\ell$.
\end{proof}

We describe the algebraic amoeba of a line $\ell$ as a semi-algebraic set.

\begin{theorem}
 Let $\ell\subset(\CC^\times)^n$ be a line with algebraic amoeba $|\ell|$  and $a\TT_\ell$ the affine subtorus
 containing $\ell$ of Lemma~\ref{L:linear}. 
 When $|\scrE_\ell|=2$, $\ell=a\TT_\ell$ and $|\ell|$ is a rational affine line.
 When $|\scrE_\ell|=3$, $\ell$ is a nondegenerate line in $a\TT_\ell\simeq(\CC^\times)^2$ and $|\ell|$ is as
 described in Example~\ref{Ex:lineInPlane}.

 Suppose that $|\scrE_\ell|\geq 4$.
 If $\ell$ is complex, then the map $\ell\to|\ell|$ is a bijection.
 If $\ell$ is real, then this map is injective on the circle containing $\scrE_\ell$ and two-to-one on its complement.

 When  $\ell$ is real, $|\ell|$ lies on a surface that is the intersection of $\binom{|\scrE_\ell|}{4}$
 quadratic hypersurfaces, one for each projection from $a\TT_\ell$ to a coordinate $(\CC^\times)^3$ and $|\ell|$ is the
 subset of that surface satisfying inequalities~\eqref{Eq:amoeba_line} from each projection to a coordinate
 $(\CC^\times)^2$. 

 When $|\ell|$ is complex, $|\ell|$ is the intersection of $A$ quadratic hypersurfaces and $B$ quartic
 hypersurfaces, where $A$ is the number of subsets of $\scrE_\ell$ of cardinality four that lie on a circle and $B$ is the
 number of those that do not lie on a circle.
\end{theorem}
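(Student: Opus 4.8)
The plan is to reduce to a nondegenerate line in a torus of dimension $m{-}1$, where $m:=|\scrE_\ell|$; the cases $m\le 3$ are then immediate, and for $m\ge 4$ the description is assembled from the case $m=4$ using the coordinate projections onto three‑dimensional subtori. For the reduction, Lemma~\ref{L:linear} lets us translate by $a^{-1}$ and apply a monomial isomorphism $\TT_\ell\xrightarrow{\sim}(\CC^\times)^{m-1}$; these operations translate $|\ell|$ and act on it by an invertible linear map, and any such isomorphism extends to a Möbius transformation of $\overline\ell\simeq\PP^1$ carrying $\scrE_\ell$ onto the set of ends of the image, so (Möbius transformations preserving circles) the real/complex type is preserved. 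Thus we may assume $\ell\subset(\CC^\times)^{m-1}$ is nondegenerate with $\overline\ell$ a line, necessarily meeting each of the $m$ coordinate hyperplanes of $\PP^{m-1}$ in a distinct point. For $m=2$ then $\ell=\TT_\ell=(\CC^\times)^1$ and $|\ell|=\RR_{>}$, a rational affine line; for $m=3$ the line is cut by a single $\alpha x+\beta y+\gamma=0$ with $\alpha\beta\gamma\ne0$ and Example~\ref{Ex:lineInPlane} applies. So assume $m\ge4$.

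The heart is a description of $|\ell|$ as a projected paraboloid. Parameterize $\overline\ell$ by $t\in\PP^1$ with $\scrE_\ell=\{a_1,\dots,a_m\}\subset\CC$, so that after a torus translation $\overline\ell$ is the image of $t\mapsto[(t{-}a_1):\dots:(t{-}a_m)]$. Writing $t=x{+}iy$ and $N_j:=|t{-}a_j|^2$, one has $N_j=s-2\,\Re(a_j)\,x-2\,\Im(a_j)\,y+|a_j|^2$ with $s:=x^2{+}y^2$ — an affine function of $(x,y,s)$ whose coefficient on $s$ equals $1$ for \emph{every} $j$. Hence $t\mapsto(N_1,\dots,N_m)$ is an affine image of the paraboloid $\{s=x^2{+}y^2\}$, so its image $\mathcal P$ is a paraboloid lying in an affine subspace $P\subset\RR^m$ of dimension $3$ (or $2$, when the $a_j$ are collinear), with $\mathcal P\subset\RR_{\ge0}^m$ meeting the boundary only at the ends; and since $|z_j|^2=N_j/N_m$ in the affine coordinates $z_j=(t{-}a_j)/(t{-}a_m)$, the amoeba $|\ell|$ is precisely the projectivization of $\mathcal P$. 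One checks that $0\in P$ exactly when the $a_j$ lie on a (finite) circle and that $P$ has dimension $3$ exactly when the $a_j$ are not collinear; tracking which case occurs gives that $\ell\to|\ell|$ is a bijection when $\ell$ is complex (both $t\mapsto(N_j)$ and projectivization being injective) and is two‑to‑one away from the circle (or line) through the $a_j$ when $\ell$ is real (exactly one of these two maps being two‑to‑one) — recovering \cite[\S3]{NS13}. For $m=4$ there is a single projection, the identity; eliminating $(x,y,s)$ from the one linear relation cutting out $P$ and from $s=x^2{+}y^2$, and clearing denominators, produces exactly one relation among the $|z_j|^2$. When the four ends lie on a circle or a line this relation is of degree one in the $|z_j|^2$ — a quadratic hypersurface — and points on it lie in $|\ell|$ iff an auxiliary quadratic has a real root; by the circuit identity $(a_j{-}a_k)(t{-}a_i)+(a_k{-}a_i)(t{-}a_j)+(a_i{-}a_j)(t{-}a_k)=0$ applied to each triple, together with Ptolemy's theorem (which puts $t$ on the circle exactly on the boundary of the amoeba), this discriminant condition is precisely the conjunction of the inequalities~\eqref{Eq:amoeba_line}. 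When the four ends are not concyclic the relation is of degree two in the $|z_j|^2$ — a quartic hypersurface — and equals $|\ell|$ with no further inequalities, positivity being forced because every solution already lies on the nonnegative paraboloid $\{s=x^2{+}y^2\}$. This is the case $m=4$ of the theorem.

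For $m\ge 4$ I would then prove the key identity $|\ell|=\bigcap_{|S|=4}\pi_S^{-1}\!\bigl(|\pi_S(\ell)|\bigr)$, the intersection over four‑element subsets $S$ of the ends and $\pi_S$ the corresponding coordinate projection to a $(\CC^\times)^3$. For each $S$ the analogue $\mathcal P_S$ of $\mathcal P$ is cut out by one linear and one quadratic relation among the four coordinates $N_p$, $p\in S$; and because the circuits — each supported on four coordinates — of the rank‑$\le 3$ matrix with rows $(1)_j,(\Re a_j)_j,(\Im a_j)_j$ span its kernel, one gets $\mathcal P=\{N:N|_S\in\mathcal P_S\text{ for all }S\}$, which on projectivizing shows that a point all of whose four‑coordinate restrictions lie in the corresponding $|\pi_S(\ell)|$ lifts to a single point of $\mathcal P$ (the scale being pinned down, and checked positive, by the linear relation of any non‑concyclic $S$). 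Feeding the case $m=4$ into this finishes the proof: if $\ell$ is real every $S$ is concyclic, so $|\ell|$ is the common zero set of the $\binom m4$ quadrics — a surface — cut by the inequalities~\eqref{Eq:amoeba_line} from all $\binom m3$ triples; if $\ell$ is complex the four‑subsets split into $A$ concyclic ones, giving quadrics, and $B$ non‑concyclic ones, giving quartics, and since $\scrE_\ell$ is not concyclic every triple lies in some non‑concyclic four‑subset, whose quartic already forces the triangle inequality of that triple, so those inequalities are redundant and $|\ell|$ is just the intersection of the $A$ quadrics with the $B$ quartics.

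The step I expect to be the main obstacle is the four‑subset identity $|\ell|=\bigcap_S\pi_S^{-1}(|\pi_S(\ell)|)$: combining the circuit‑spanning fact with the passage to projectivizations takes care — the common scaling must be shown well defined, positive, and independent of the chosen four‑subset — and the configurations in which several ends are collinear must be treated separately, since there $P$ drops a dimension and the ``quadric'' of a projection must be singled out, among the surviving linear relations, as the one that is homogeneous in the $|z_j|^2$. A secondary point needing care is the exact identification, in the real case $m=4$, of the discriminant condition with the system~\eqref{Eq:amoeba_line}.
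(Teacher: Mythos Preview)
Your approach is sound and genuinely different from the paper's. Both arguments begin with the same reduction via Lemma~\ref{L:linear} to a nondegenerate line in $(\CC^\times)^{m-1}$ and both establish the description of $|\ell|$ by pulling back constraints from the coordinate projections to each $(\CC^\times)^3$ (and, in the real case, each $(\CC^\times)^2$). The difference is in how the $m=4$ case is analyzed and how sufficiency of these constraints is proved. The paper treats $m=4$ by direct elimination in two worked examples (Examples~\ref{Ex:real_line} and~\ref{Ex:complex_line}), reading off the quadratic or quartic relation and, in the complex case, invoking \cite[Lem.~7]{NS13} to match the quartic with $|\ell|$; for general $m$ it argues sufficiency by observing that in each three--dimensional projection one coordinate is a (single-- or double--valued) function of the other two, so the equations successively determine all coordinates from any two. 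Your route is more structural: the map $t\mapsto(|t-a_j|^2)_j$ realizes $|\ell|$ (up to squaring coordinates) as the projectivization of an affine image of the paraboloid $\{s=x^2+y^2\}$, the real/complex dichotomy becomes the dichotomy $0\in P$ versus $0\notin P$ for the affine span $P$, and sufficiency of the four--variable constraints is the matroid fact that circuits of the rank--$\le 3$ configuration $(1,\Re a_j,\Im a_j)$ span its dependency space. Your framework explains uniformly why the degree jumps from two to four exactly at concyclicity and why no inequalities are needed in the complex case, at the cost of the bookkeeping you flag (positivity and well--definedness of the common scale under projectivization, and the collinear degeneration where $\dim P=2$); the paper's argument is more elementary and self--contained but leans on explicit coordinate computations and on the external input \cite[Lem.~7]{NS13}. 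One small point to tighten in your write--up: ``projectivization'' here is really division by $N_m$ followed by coordinatewise square root, so when you pass from statements about $\mathcal P$ to statements about $|\ell|$ you should make that passage explicit.
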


The quadrics and quartics are described explicitly in Lemmas~\ref{L:real_line} and~\ref{L:complex_line}.

\begin{proof}
 By Lemma~\ref{L:linear}, if we choose an isomorphism $a\TT_\ell\simeq(\CC^\times)^{|\scrE_\ell|-1}$ and redefine $n$, we
 may assume that $|\scrE_\ell|=n{+}1$ and that $\ell$ meets each coordinate plane in distinct points.
 The conclusions for $|\scrE_\ell|<4$ are immediate.
 Suppose that $|\scrE_\ell|\geq 4$.
 Any equation or inequality satisfied by the image of $|\ell|$ under a projection to a coordinate subspace is
 satisfied by $|\ell|$.
 Thus the inequalities~\eqref{Eq:amoeba_line} obtained from projections to each coordinate $(\CC^\times)^2$ are valid on
 $|\ell|$ as are any equations coming from a projection to a coordinate $(\CC^\times)^3$.
  Lemmas~\ref{L:real_line} and~\ref{L:complex_line} show that these equations from each coordinate $(\CC^\times)^3$ are
 quadratic and quartic as the image of the line in that  $(\CC^\times)^3$ is real or complex, respectively.

 We prove the assertions about the degree of the map $\ell\to|\ell|$.
 If $\ell$ is complex, then it is complex in a projection to some coordinate $(\CC^\times)^3$.
 By Lemma~\ref{L:complex_line}, the map from $\ell$ to the algebraic amoeba of such a projection is one-to-one, 
 thus the map $\ell\to|\ell|$ is one-to-one.
 When $\ell$ is real,  
 we may assume that $\scrE_\ell\subset\RR\PP^1\subset\overline{\ell}$ and complex conjugation on
 $\CC\subset\PP^1=\overline{\ell}$ is the usual conjugation.
 Then a point and its conjugate both have the same absolute value, which shows that the map on $\ell\smallsetminus\RR\PP^1$
 is at least two-to-one.  
 The projection to a coordinate $(\CC^\times)^2$ is two-to-one on this set and one-to-one on the real points
 $\RR\PP^1\smallsetminus\scrE_\ell$.
 This proves the assertion about the degree of $\ell\mapsto|\ell|$ when $\ell$ is real.

 We show that the necessary inequalities and equations are sufficient to define $|\ell|$.
 In Lemma~\ref{L:real_line}, the $z$-coordinate of $|\ell|$ is a function of the $x$- and $y$-
 coordinates, as $|\ell|$ is a graph over its projection to the $(x,y)$-plane.
 Thus when $\ell$ is real, the points of $|\ell|$ are determined by the quadratic equations from these projections to
 each coordinate $(\CC^\times)^3$, and the inequalities from further projections to each coordinate $(\CC^\times)^2$.

 When $\ell$ is complex, at least one projection to a coordinate $(\CC^\times)^3$ is a complex line.
 As shown in the proof of Lemma~\ref{L:complex_line}, under the further projection to a coordinate $(\CC^\times)^2$, this is the
 graph of two functions, coming from the branches of a quadratic in $z^2$.
 When the projection to a coordinate $(\CC^\times)^3$ is real, the previous paragraph shows that the coordinate functions
 may be recovered from the inequalities and the quadratic equation for this projection.
 Thus the points of $|\ell|$ are determined by the quadratic and quartic
 equations coming from projections to each coordinate $(\CC^\times)^3$.
\end{proof}

\section{The dimension of an amoeba}\label{S:dimension}

The dimension of an amoeba may be understood in differential-geometric terms.
At a smooth point $x$ of a variety $V\subset(\CC^\times)^n$ of dimension $k$, the rank of the differential $d_x\Log$ of the
map to the amoeba is $2k{-}l$, where $l$ is the dimension as a real vector space of the intersection of $T_xV$ with the tangent space of the  
fiber $\UU^n$ at $x$. 
Since $\sqrt{-1}\cdot T_x\UU^n=T_x\RR_>^n$ and $V$ is complex, $d\Log$ and $d\Arg$ have the same rank on 
$T_xV$.  
Thus the amoeba and coamoeba of $V$ have the same dimension.
We would like to understand the dimension of $\scrA(V)$ from the geometry of $V$.

We have the bound $\dim_\RR\scrA(V)\leq\min\{n, 2\dim_\CC V\}$ as $\scrA(V)\subset\RR^n$ and
$\dim_\RR V=2\dim_\CC V$, and we seek structures on $V$ that imply this inequality is strict.
If a subvariety $V\subset(\CC^\times)^n$ has an action by a subtorus $\TT$ of dimension $l$, then
the orbit space $V/\TT$ is a subvariety of the quotient torus $(\CC^\times)^n/\TT$.
The amoeba $\scrA(V)\subset\RR^n$ has a translation action by the $l$-dimensional rational subspace $\scrA(\TT)$ with
orbit space $\scrA(V/\TT)$.
Taking this into account, we conclude that 
$\dim_\RR\scrA(V)\leq \min\{ n, 2\dim_\CC V{-}l\}$.

If $V$ lies in an affine subtorus $a\TT$, then its amoeba lies in $\Log(a)+\scrA(\TT)$, a rational affine subspace of the
same dimension as $\TT$.
This further bounds $\dim_\RR\scrA(V)$.

We identify a structure on $V$ that generalizes these observations.
We write $\dim X$ for the dimension of a complex variety $X$ and reserve $\dim_\RR$ for dimension as a real analytic set.

\begin{definition}\label{De:diminish}
 Let $V\subset(\CC^\times)^n$ be an irreducible subvariety and $\TT\subset(\CC^\times)^n$ a subtorus.
 We say that $\TT$ has a \demph{diminishing action}\footnote{This was called a near action in the original version of this paper.} on $V$ if
 we have 
 \begin{equation}\label{Eq:DA}
   \dim\TT \ <\  2(\dim V - \dim W)
   \quad\mbox{and}\quad
    2\dim W\ <\ n\ -\ \dim\TT \,,
 \end{equation}
 where $\defcolor{W}:=(\TT\cdot V)/\TT$ is the image of $V$ in the quotient torus $(\CC^\times)^n/\TT$.\hfill{$\diamond$}
\end{definition}

 A general fiber $F$ of the map $V\twoheadrightarrow W$ lies in an affine subtorus $a\TT$.
 The first inequality of~\eqref{Eq:DA} implies that $F$ has small codimension in $a\TT$ and the second inequality of~\eqref{Eq:DA} implies that
 $W$ has large codimension in $(\CC^\times)^n/\TT$.
 They together imply that
 \begin{equation}\label{Eq:single_condition}
   2\dim W\ +\ \dim\TT\ <\ \min\{n\,,\, 2\dim V\}\,.
 \end{equation}
%

\begin{example}
 We give three examples of varieties with a diminishing action by a subtorus.
 If a nontrivial proper torus $\TT$ acts on $V$ with $n > 2\dim V-\dim \TT$, then $\TT$ has a diminishing action on
 $V$, as $W=V/\TT$ has dimension $\dim V - \dim\TT$.

 If $V\subset(\CC^\times)^n$ lies in a proper affine subtorus $a\TT$, then 
 $W=(\TT\cdot V)/\TT$ is a point and has dimension zero.
 If $2\dim V > \dim\TT$ we have $\dim_\RR\scrA(V)\leq\dim\TT<\min\{n,2\dim V\}$ and so $\TT$ has a diminishing action on
 $V$. 

 Let $P\subset\TT\simeq(\CC^\times)^3$ be a hypersurface with a three-dimensional amoeba and
 $\ell\subset\TT'\simeq(\CC^\times)^3$ be a nondegenerate line.
 If we set $V:=P\times\ell\subset\TT\times\TT'$, then $\TT$ has a diminishing action on $V$ as
 in this case $W=\ell$ and $\dim V=\dim\TT=3$ but $\dim W=1$ and $n=6$ so that the inequalities in
 Definition~\ref{De:diminish} hold.
 Note that $\scrA(V)=\scrA(P)\times\scrA(\ell)$, so that $\dim_\RR\scrA(V)=5<\min\{n, 2\dim V\}$.
 \hfill{$\diamond$}
\end{example}

\begin{theorem}\label{Th:dimensionFirst}
 Let $V\subset(\CC^\times)^n$ be an irreducible subvariety.
 If a nontrivial proper subtorus $\TT$ has a diminishing action on $V$, then
 $\dim_\RR\scrA(V)<\min\{n,2\dim V\}$. 
\end{theorem}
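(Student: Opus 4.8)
The plan is to bound $\dim_\RR\scrA(V)$ by relating it to the amoeba of the fiber $F$ of the dominant map $\pi\colon V\twoheadrightarrow W$ over a general point of $W$, together with the amoeba of $W$. Write $q\colon(\CC^\times)^n\to(\CC^\times)^n/\TT=:(\CC^\times)^{n-l}$ for the quotient, where $l:=\dim\TT$, and let $\bar q\colon\RR^n\to\RR^{n-l}$ be the induced linear projection with kernel $\scrA(\TT)=:\Pi_\RR$, a rational linear subspace of dimension $l$. Since $\Log$ is compatible with $q$ and $\bar q$, we have $\bar q(\scrA(V))=\scrA(W)$, and for a general point $w\in W$ the fiber of $\scrA(V)\to\scrA(W)$ over $\Log(w)$ is contained in a translate of $\Pi_\RR$ and contains $\Log(a)+\scrA(F)$ for the fiber $F=\pi^{-1}(w)$, which lies in an affine subtorus $a\TT$. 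The key point is therefore the estimate
\[
  \dim_\RR\scrA(V)\ \leq\ \dim_\RR\scrA(W)\ +\ \max_{w}\dim_\RR\bigl(\scrA(V)\cap\bar q^{-1}(\Log(w))\bigr)\,,
\]
which follows from generic local triviality / the fiber-dimension theorem for the semi-algebraic map $|V|\to|W|$ (using that algebraic amoebas are semi-algebraic, as in the Proposition), and the matching bound on each piece. One then bounds each term: $\dim_\RR\scrA(W)\leq\min\{n-l,\,2\dim W\}$ trivially, and the generic fiber term is at most $\min\{l,\,2\dim F\}=\min\{l,\,2(\dim V-\dim W)\}$ since $F\subset a\TT$ has amoeba inside a translate of the $l$-dimensional space $\Pi_\RR$.

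Now I feed in the two diminishing inequalities. From $l<2(\dim V-\dim W)$ we get $\min\{l,2(\dim V-\dim W)\}=l$, so the general-fiber contribution is $\leq l$ but, more importantly, I want a strict drop, so I use that the fiber sits in $\TT$-translate and that $2\dim F=2(\dim V-\dim W)>l=\dim\TT$ forces the fiber amoeba to be \emph{all} of its ambient $l$-space only in degenerate situations — but cleanly, I'll just use the crude bound $\dim_\RR(\text{fiber of }\scrA(V))\leq l$ and instead extract the strict inequality from the $W$-direction together with the interaction. Combining,
\[
  \dim_\RR\scrA(V)\ \leq\ \min\{n-l,\,2\dim W\}\ +\ l\ =\ \min\{n,\ 2\dim W + l\}\,.
\]
The second diminishing inequality $2\dim W<n-l$ gives $2\dim W+l<n$, so this reads $\dim_\RR\scrA(V)\leq 2\dim W+l<n$; and the first inequality gives $2\dim W+l<2\dim W+2(\dim V-\dim W)=2\dim V$. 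Hence $\dim_\RR\scrA(V)\leq 2\dim W+l<\min\{n,2\dim V\}$, which is the claim.

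The main obstacle is making the fiber-dimension inequality for $\scrA(V)\to\scrA(W)$ rigorous: one must argue that the dimension of $\scrA(V)$ is controlled by the generic fiber dimension of this semi-algebraic projection rather than by some small exceptional fiber, and that the bound on the generic fiber amoeba is correct even though the family of affine subtori $a\TT$ varies with $w$. I expect to handle this by passing to the algebraic amoeba $|V|$ (semi-algebraic), stratifying $|W|$ so that $|V|\to|W|$ has locally constant fiber dimension over each stratum, and observing that over the top stratum the fibers are contained in translates of the fixed subspace $\Pi_\RR$ coming from $\TT$, so each contributes at most $\dim_\RR|F|\le\min\{l,2\dim F\}$; the constant dominant-stratum fiber dimension then adds to $\dim_\RR|W|$ to bound $\dim_\RR|V|=\dim_\RR\scrA(V)$. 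A secondary subtlety is that $\TT\cdot V$ need not equal $V$, so $F$ is a fiber of $V\to W$ rather than a single $\TT$-orbit; but this only helps, as $\dim F=\dim V-\dim W$ and $F$ still lies in one coset $a\TT$, which is all the argument uses.
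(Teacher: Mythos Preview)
Your argument is correct and is essentially the same as the paper's: bound $\dim_\RR\scrA(V)$ by $\dim_\RR\scrA(W)+l$ via the projection $\bar q\colon\scrA(V)\twoheadrightarrow\scrA(W)$ whose fibers lie in translates of $\Pi_\RR$, then use the two diminishing inequalities to conclude $2\dim W+l<\min\{n,2\dim V\}$. The paper's proof is three lines and simply asserts the fiber-dimension inequality $\dim_\RR\scrA(V)\le\dim_\RR\scrA(W)+\dim_\RR F_\scrA$; your extra care with semi-algebraic stratification of $|V|\to|W|$ is more justification than the paper provides, and the digression about whether the fiber amoeba fills $\Pi_\RR$ is unnecessary (as you yourself note), but the core route is identical.
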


\begin{proof}
 Let $\TT\subset(\CC^\times)^n$ be a nontrivial a proper subtorus and let $W=\TT\cdot V/\TT\subset(\CC^\times)^n/\TT$.
 Any fiber \defcolor{$F$} of $\scrA(V) \twoheadrightarrow\scrA(W)$ lies in
 a translation of $\scrA(\TT)$ and thus $\dim_\RR F\leq \dim \TT$.
 Suppose that $F$ is a general fiber.
 Then
 \[
   \dim_\RR \scrA(V)\  =\  \dim_\RR \scrA(W)\ +\ \dim_\RR F\ \leq\
   2\dim W + \dim \TT\,.
 \]
 If $\TT$ has a diminishing action on $V$, then $\dim_\RR \scrA(V)<\min\{n,2\dim_\CC V\}$, by~\eqref{Eq:single_condition}.
\end{proof}

We believe the following is true.

\begin{conjecture}\label{C:dim}
 For an irreducible subvariety $V\subset(\CC^\times)^n$, if $\dim_\RR\scrA(V)<\min\{n,2\dim V\}$, then there is a 
 nontrivial proper subtorus $\TT$ of $(\CC^\times)^n$ having a diminishing action on $V$.
\end{conjecture}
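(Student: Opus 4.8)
The plan is to translate the statement into infinitesimal geometry, produce the subtorus from the ``real'' tangent directions of $V$, and use induction on $n$ to handle the quotient. Let $k=\dim V$. By the differential-geometric description opening Section~\ref{S:dimension}, at a general smooth point $x\in V$ one has $\dim_\RR\scrA(V)=2k-l$, where $l$ is the dimension of the kernel of $d_x\Log$ on $T_xV$, that is $l=\dim_\RR(T_xV\cap\sqrt{-1}\RR^n)$; as $T_xV$ is a complex subspace of $\CC^n$, multiplication by $\sqrt{-1}$ identifies this with $T_xV\cap\RR^n$, the directions in which $\Arg$ is locally constant. Since $2k-l\le n$ automatically, the hypothesis $\dim_\RR\scrA(V)<\min\{n,2k\}$ is equivalent to the condition
\[
  l\ >\ \max\{0,\,2k-n\}\,;
\]
that is, a general tangent space of $V$ carries an excess of real directions, equivalently a general fibre of $\Arg|_V\colon V\to\coscrA(V)$, a real-analytic set inside a coset of the totally real subgroup $\RR_>^n$, is larger than a variety of dimension $k$ in $(\CC^\times)^n$ would generically allow.

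I would argue by induction on $n$. If $V$ is degenerate, say $V\subset a\TT'$ with $\dim\TT'=n'<n$, then either $2\dim V>n'$, in which case $\TT'$ is itself a diminishing torus (the Example following Definition~\ref{De:diminish}), or $2\dim V\le n'$ and the hypothesis descends verbatim to $V$ inside $a\TT'\cong(\CC^\times)^{n'}$, so induction produces $\TT\subseteq\TT'$ with a diminishing action there and the same data works in $(\CC^\times)^n$, since $2\dim W<n'-\dim\TT\le n-\dim\TT$ for $W=(\TT\cdot V)/\TT$. So assume $V$ nondegenerate. The subtorus should come from the real-direction distribution $x\mapsto T_xV\cap\RR^n$: on the Zariski-open where this has constant rank $l$ it is real-analytic and integrable, its leaves being the connected components of the fibres $V\cap b\RR_>^n$, each an $l$-dimensional totally real submanifold of $V$. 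The target is a nontrivial subtorus $\TT$ for which the general fibre of $V\twoheadrightarrow W:=(\TT\cdot V)/\TT$ is positive-dimensional --- and one must allow $\TT\cdot V\ne V$: if $V=S\times C\subset(\CC^\times)^6$ with $S\subset(\CC^\times)^3$ a nondegenerate surface of full amoeba dimension and $C\subset(\CC^\times)^3$ a nondegenerate curve, then $V$ is torus-free, yet $l=1>0$ and $\TT=(\CC^\times)^3\times\{1\}$ is a diminishing torus with $W=C$, so one is genuinely seeking a torus, not a symmetry.

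Once such a $\TT$ is found, the induction closes as follows. Since $\Log$ is a group homomorphism, $\scrA(\TT\cdot V)=\scrA(\TT)+\scrA(V)$, so $\scrA(W)$ is the image of this Minkowski sum in $\RR^n/\scrA(\TT)$ and $\dim_\RR\scrA(W)\le\min\{n-\dim\TT,\dim_\RR\scrA(V)\}$. Comparing $\dim_\RR\scrA(V)<\min\{n,2k\}$ against the inductive hypothesis applied to $W$ in the $(n-\dim\TT)$-dimensional quotient torus --- either $\scrA(W)$ is as large as possible, and a short computation then yields both inequalities of Definition~\ref{De:diminish} for $\TT$, or $W$ carries a diminishing torus whose preimage in $(\CC^\times)^n$ is readily seen to be one for $V$ --- completes the step. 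The inequality $\dim\TT<2(\dim V-\dim W)$ is precisely the demand that the general fibre of $V\twoheadrightarrow W$ be large, which is why the construction of $\TT$ must be coupled to the positive-dimensional fibres of $\Arg|_V$.

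The crux is constructing $\TT$: upgrading the infinitesimal condition $l>\max\{0,2k-n\}$ to a genuine subtorus. A single leaf of the real-direction distribution does not suffice --- the Zariski closure of $Z\cap\RR_>^n$ for a variety as harmless as $Z=\{x+y=1\}$ is not a subtorus coset, though it has the expected dimension --- so one must exploit that the distribution has rank $l$ throughout an open set, i.e.\ that $V$ is foliated by such leaves, and marry the real-analytic continuation of a leaf inside $V$ to the group structure of $(\CC^\times)^n$; plausibly a rational subspace of $\RR^n$ attached to $\bigcup_x(T_xV\cap\RR^n)$, or to the tropical variety $\trop(V)$ of asymptotic directions (which has dimension $k$), pins down $\TT$. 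Verifying that the resulting $\TT$ really diminishes $V$ --- especially that $\dim W$ is small enough for $2\dim W<n-\dim\TT$, where nondegeneracy of $V$ must enter --- is the remaining work, and I expect it to be essentially the whole difficulty of the conjecture.
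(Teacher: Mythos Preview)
The paper does not prove this statement: it is stated as a conjecture, and the authors write immediately after it that they ``can only prove this when the dimension of the amoeba is the minimum possible,'' which is the content of the subsequent theorem about $\dim_\RR\scrA(V)=\dim V$ forcing $V$ to be an affine subtorus. So there is no proof in the paper to compare against.

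Your proposal is an outline of a strategy, not a proof, and you say so yourself. The reduction to the nondegenerate case is fine, and your reformulation $l>\max\{0,2k-n\}$ and the $S\times C$ example are both correct and clarifying. But the entire content of the conjecture is the step you label ``the crux'': producing, from the real-direction distribution $x\mapsto T_xV\cap\RR^n$, an honest subtorus $\TT$ together with control on $\dim W$. You do not construct $\TT$; you offer only the suggestion that ``plausibly a rational subspace \ldots\ pins down $\TT$'' and then concede that verifying the diminishing inequalities ``is the remaining work, and I expect it to be essentially the whole difficulty of the conjecture.'' That is an accurate self-assessment. The inductive shell you set up does nothing without this construction: in the nondegenerate case you have no candidate $\TT$ to feed into it, and your closing paragraph for the induction presupposes that $\TT$ has already been produced with the general fibre of $V\twoheadrightarrow W$ positive-dimensional.

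In short, your write-up correctly identifies where the difficulty lies and packages the easy reductions, but it does not advance beyond what the paper already knows; the conjecture remains open by your argument. (The paper notes that the dimension question was subsequently resolved by Draisma, Rau, and Yuen; if you want to turn this into a proof, that is where to look.)
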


\begin{remark}
 After posing Conjecture~\ref{C:dim}, Draisma, Rau, and Yuen~\cite{DRY} gave the following formula for the dimension of an amoeba, which
 they showed implies Conjecture~\ref{C:dim}:
 \[
   \dim_\RR\scrA(V)\ =\
   \min\{2\dim \overline{\TT\cdot V}-\dim\TT \mid \TT\mbox{ is a subtorus of }(\CC^\times)^n\}\,.
 \]
 This result suggests the problem of giving explicit methods to determine this dimension.
 For example, what is the dimension of the amoeba of a linear subspace in $\CC^n$?
 \hfill{$\diamond$}
\end{remark}

We prove Conjecture~\ref{C:dim} when the dimension of the amoeba is the minimum possible.

\begin{theorem}
 Let $V\subset(\CC^\times)^n$ be an irreducible subvariety.
 If $V$ and its amoeba have the same dimension, then $V$ is an affine subtorus of $(\CC^\times)^n$.
\end{theorem}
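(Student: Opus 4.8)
The plan is to study $V$ via its logarithmic derivatives $\omega_i:=dx_i/x_i$, which are holomorphic $1$-forms on the smooth locus $V^\circ$; since $V$ is irreducible, $V^\circ$ is connected and Zariski dense, and I write $k:=\dim V$. Because $V^\circ\hookrightarrow(\CC^\times)^n$ is an immersion, the $dx_i$, hence the $\omega_i$, span the holomorphic cotangent space at every point of $V^\circ$, which therefore has complex dimension $k$. In logarithmic coordinates $d\Log=(\Re\omega_1,\dots,\Re\omega_n)$, and since $\omega\mapsto\Re\omega$ is a real-linear isomorphism from the holomorphic cotangent space at $x$ onto the real dual of $T^{\RR}_xV^\circ$, the rank of $d_x\Log$ equals $\dim_{\RR}\mathrm{span}_{\RR}\{\omega_1|_x,\dots,\omega_n|_x\}$. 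I would first observe that this rank is always at least $k$: multiplication by $\sqrt{-1}$ interchanges $T_x\RR_>^n$ and $T_x\UU^n$, so $T_xV\cap T_x\RR_>^n$ and $\ker d_x\Log=T_xV\cap T_x\UU^n$ share a real dimension $l$, their sum is a complex subspace of $T_xV$ of complex dimension $l$, whence $l\le k$ and $\mathrm{rank}\,d_x\Log=2k-l\ge k$. On the other hand $\dim_{\RR}\scrA(V)=k$ forces the generic rank of $\Log$ on $V^\circ$ to be at most $k$ (it equals $\dim\scrA(V^\circ)\le\dim\scrA(V)$). Hence $d\Log$ has constant rank $k$ on $V^\circ$, and on a dense open $U\subseteq V^\circ$ on which, after relabeling, $\omega_1,\dots,\omega_k$ form a holomorphic coframe, every remaining $\omega_j$ lies in the \emph{real} span of $\omega_1,\dots,\omega_k$.

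Next I would globalize this with constant coefficients. Solving $\omega_j=\sum_{l\le k}c_{jl}\omega_l$ in terms of the coframe shows the $c_{jl}$ are holomorphic on $U$; being real-valued they are locally constant, so by the identity theorem these relations hold on all of $V^\circ$ with $c_{jl}\in\RR$ \emph{constant}. Taking real parts, $\log|x_j|=\alpha_j+\sum_l c_{jl}\log|x_l|$ on $V^\circ$ for constants $\alpha_j$, so $\scrA(V)$ lies in the $k$-dimensional affine subspace $L$ these equations define. The recession directions of $L$ contain the logarithmic limit set $\scrL^\infty(V)$, which by Bergman and Bieri-Groves generates a rational polyhedral fan of pure dimension $k$; a $k$-dimensional affine subspace whose direction space contains a $k$-dimensional rational subspace is itself rational, so in fact $c_{jl}\in\mathbb{Q}$. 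I expect the passage from \emph{real} to \emph{rational} coefficients to be the crux: it is exactly here that the dimension hypothesis enters, through pure-dimensionality of the tropical variety, and without it one would only confine $\scrA(V)$ to a possibly irrational affine subspace.

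To finish, clear a common denominator $q$, so $q\,\omega_j-\sum_l p_{jl}\omega_l=0$ with $p_{jl}\in\ZZ$; this reads $d\log\bigl(x_j^{\,q}\prod_l x_l^{-p_{jl}}\bigr)=0$ on the connected $V^\circ$, so each $x_j^{\,q}\prod_l x_l^{-p_{jl}}$ equals a constant $b_j\in\CC^\times$, and by Zariski density these $n-k$ monomial relations hold throughout $V$. The characters $q\,e_j^{*}-\sum_l p_{jl}e_l^{*}$ $(k<j\le n)$ are $\ZZ$-linearly independent, so these relations place $V$ in a coset of a $k$-dimensional subtorus $\TT$; thus $V\subseteq a\TT$ with $\dim a\TT=k=\dim V$, and irreducibility of both forces $V=a\TT$. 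The cases $\dim V=0$ and $\dim V=n$ are immediate.
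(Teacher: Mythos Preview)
Your proof is correct and takes a genuinely different route from the paper's. The paper first treats the hypersurface case: since every component of the complement of a hypersurface amoeba is convex, an amoeba of dimension $n{-}1$ must itself be a hyperplane, and then Bergman/Bieri--Groves forces this hyperplane to be rational; the general case is reduced to this one via enough projections $(\CC^\times)^n\twoheadrightarrow(\CC^\times)^{k+1}$. Your argument instead works intrinsically on the smooth locus, using that the logarithmic $1$-forms $\omega_i=dx_i/x_i$ span the holomorphic cotangent bundle and that the rank hypothesis forces the $\omega_j$ to satisfy \emph{real}-constant linear relations among a coframe $\omega_1,\dots,\omega_k$ (via the principle that a real-valued holomorphic function on a connected set is constant). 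Both approaches ultimately invoke the pure $k$-dimensionality of the tropical variety to upgrade ``real'' to ``rational'' and thereby land $V$ in an affine subtorus. The paper's argument is more global and amoeba-theoretic, leaning on the specific convexity of hypersurface amoeba complements; yours is local and differential-geometric, avoids the reduction to hypersurfaces, and has the pleasant feature of directly producing the monomial relations $x_j^{\,q}\prod_l x_l^{-p_{jl}}=b_j$ that cut out the affine subtorus containing $V$.
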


\begin{proof}
 Suppose that $V$ is a hypersurface, so that $\dim V = n{-}1$.
 Then $\dim_\RR\scrA(V)=n{-}1$, so that it is a hypersurface in $\RR^n$.
 Since each component of $\RR^{n}\smallsetminus\scrA(V)$ is convex, $\scrA(V)$ must be a hyperplane, as it bounds every
 such component.
 Since the logarithmic limit set of $V$---the set of asymptotic directions of $\scrA(V)$---is a rational
 polyhedron in $S^{n-1}$  of dimension $n{-}2$, $\scrA(V)$ is a rational affine hyperplane, $a+\Pi_\RR$, for some
 subgroup $\defcolor{\Pi}\subset\ZZ^n$ of rank $n{-}1$.

 Let $\TT:=\Pi\otimes_\ZZ\CC^\times$ be the corresponding subtorus and let $\defcolor{W}:=(\TT\cdot V)/\TT$ be the image
 of $V$ in $\CC^\times\simeq(\CC^\times)^{n}/\TT$.
 Then $\scrA(W)$ is the image of $\scrA(V)$ in $\RR\simeq\RR^{n}/\Pi_\RR$.
 As this is a point and $W$ is irreducible, we conclude that $W$ is a point and $V$ is a single orbit of $\TT$.

 Now suppose that $V$ is not a hypersurface and set $\defcolor{k}:=\dim_\RR\scrA(V)=\dim V$.
 For every surjective homomorphism $\varphi\colon(\CC^\times)^n\twoheadrightarrow(\CC^\times)^{k+1}$ with $\varphi(V)$ a
 hypersurface, $\scrA(\varphi(V))=\Phi(\scrA(V))$, where $\Phi$ is the corresponding linear surjection
 $\RR^n\twoheadrightarrow\RR^{k+1}$. 
 By the previous arguments, $\varphi(V)$ is an affine subtorus of $(\CC^\times)^{k+1}$, and therefore $V$ lies in
 an affine subtorus of dimension $n{-}1$.
 Doing this for sufficiently many independent homomorphisms $\varphi$ and taking the intersections of the affine subtori of 
 dimension $n{-}1$ proves the theorem.
\end{proof}

\section{Most amoebas do not have a finite amoeba basis}\label{S:not_finite}

As introduced by Schroeter and de Wolff~\cite{SdW}, a subvariety $V\subset(\CC^\times)^n$ has a 
\demph{finite amoeba basis} if there exist Laurent polynomials $f_1,\dotsc,f_r$ such that 
\[
  \scrA(V)\ =\ \scrA(f_1)\cap \scrA(f_2)\cap \dotsb \cap \scrA(f_r)\,.
\]
Theorem~\ref{T:noFAB} shows that an irreducible nondegenerate variety whose amoeba has dimension less than $n$ does not
have a finite amoeba basis, and we expect that it is rare for a variety to have a finite amoeba basis.
We also exhibit some varieties with a finite amoeba basis.

By the distributivity of union over intersection, we have the following lemma.

\begin{lemma}\label{L:FAB}
 The class of varieties admitting a finite amoeba basis is closed under finite union.
\end{lemma}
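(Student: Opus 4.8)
The plan is to prove that if $V = V_1 \cup \dots \cup V_k$ where each $V_i$ has a finite amoeba basis, then $V$ also has a finite amoeba basis, by combining two facts: that $\scrA(V) = \scrA(V_1) \cup \dots \cup \scrA(V_k)$ (the image of a union is the union of images), and that the amoeba of a hypersurface $\calV(fg)$ equals $\scrA(f) \cup \scrA(g)$, so amoebas of hypersurfaces are closed under finite union. Given these, the equation
\[
\scrA(V) \;=\; \bigcup_{i=1}^k \bigcap_{j=1}^{r_i} \scrA(f_{i,j})
\]
can be rewritten using the distributive law for union over intersection.

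First I would record the hypersurface fact: for Laurent polynomials $f, g$, one has $\calV(fg) = \calV(f) \cup \calV(g)$, hence $\scrA(fg) = \scrA(f) \cup \scrA(g)$. Next I would invoke distributivity: for any finite family of finite index sets, $\bigcup_{i} \bigcap_{j \in J_i} A_{i,j} = \bigcap_{\phi} \bigcup_{i} A_{i,\phi(i)}$, where $\phi$ ranges over all choices of $\phi(i) \in J_i$. Applying this to the $A_{i,j} = \scrA(f_{i,j})$ expresses $\scrA(V)$ as a finite intersection, indexed by such choice functions $\phi$, of the sets $\bigcup_{i=1}^k \scrA(f_{i,\phi(i)})$. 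By the hypersurface fact, each such union is itself the amoeba of the single hypersurface cut out by the product $\prod_{i=1}^k f_{i,\phi(i)}$. Since there are only finitely many choice functions $\phi$, this exhibits $\scrA(V)$ as a finite intersection of hypersurface amoebas, which is precisely a finite amoeba basis for $V$.

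There is really no serious obstacle here — this is a bookkeeping argument, as the statement's own preamble ("By the distributivity of union over intersection") signals. The only point requiring the slightest care is making sure the two operations interact correctly: amoeba commutes with union (used to pass from $\scrA(V)$ to the union of the $\scrA(V_i)$), and union of hypersurface amoebas is again a hypersurface amoeba (used to collapse each $\bigcup_i \scrA(f_{i,\phi(i)})$ back to a single polynomial). As long as both are stated cleanly, the distributive rewriting does the rest, and the finiteness of the resulting family follows from the finiteness of the number of choice functions $\phi$.
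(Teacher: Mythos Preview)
Your argument is correct and is exactly the unpacking of the paper's one-line justification (``By the distributivity of union over intersection''): you use $\scrA(V_1\cup\dotsb\cup V_k)=\bigcup_i\scrA(V_i)$, distribute union over intersection to get $\bigcap_\phi \bigcup_i \scrA(f_{i,\phi(i)})$, and collapse each inner union to a single hypersurface amoeba via $\scrA(fg)=\scrA(f)\cup\scrA(g)$. The paper gives no further detail, so your write-up simply spells out what the author left implicit.
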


The \demph{Newton polytope} $\defcolor{P(f)}$ of a Laurent polynomial $f$ is the convex hull of the set of exponents of its
non-zero monomials. 
A variety $V\subset(\CC^\times)^n$ is an \demph{independent complete intersection} if it is the set-theoretic complete 
intersection of polynomials $f_1,\dotsc,f_r$ whose Newton polytopes are affinely independent.
Any affine subtorus is an independent complete intersection, as subtori of codimension $r$ are defined by $r$
independent binomials.

\begin{theorem}\label{T:FAB}
  An independent complete intersection $V$ has a finite amoeba basis.
  If $f_1,\dotsc,f_r$ are polynomials defining $V$ set-theoretically with affinely independent Newton polytopes, then 
\[
     \scrA(V)\ =\ \scrA(f_1)\cap \scrA(f_2)\cap \dotsb \cap \scrA(f_r)\,.
\]
\end{theorem}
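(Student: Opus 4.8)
The inclusion $\scrA(V)\subseteq\bigcap_i\scrA(f_i)$ is automatic, since $V\subseteq\calV(f_i)$ for each $i$. The content is the reverse inclusion: if $\bx\in\RR^n$ has $\bx\in\scrA(f_i)$ for all $i$, then there is a common solution in $V$ over $\bx$. The plan is to fix a point $\bx$ in the intersection, choose a point $z_i\in\calV(f_i)$ with $\Log(z_i)=\bx$ for each $i$, and then show that the affine independence of the Newton polytopes lets us \emph{combine} these into a single point $z$ lying on all the $\calV(f_i)$ simultaneously with $\Log(z)=\bx$. The mechanism is that after fixing absolute values $e^{\bx}$, each equation $f_i=0$ becomes a condition purely on the argument vector $\theta\in\UU^n$, and affine independence of the $P(f_i)$ means these argument-conditions involve essentially disjoint sets of "new" characters, so they can be solved independently.

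The key steps, in order. (1) Reduce to the case where the $P(f_i)$ are not merely affinely independent but we have chosen coordinates adapted to them: after a monomial change of coordinates on $(\CC^\times)^n$ (which transforms amoebas by an invertible linear map and preserves the statement), and after multiplying each $f_i$ by a monomial, arrange that $f_i$ has constant term $1$ and involves a distinguished variable --- more precisely, that there are $r$ independent directions $m_1,\dots,m_r$ such that the edge of $P(f_i)$ in direction $m_i$ is nondegenerate while the $P(f_j)$ for $j\neq i$ are constant in direction $m_i$. Affine independence of the polytopes is exactly what makes such a choice possible (this is where I'd use the hypothesis most essentially). (2) Write $z=e^{\bx}\cdot\theta$ with $\theta\in\UU^n$. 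For each $i$, the condition $z\in\calV(f_i)$ together with $\bx\in\scrA(f_i)$: since $\bx\in\scrA(f_i)$, there exists \emph{some} $\theta^{(i)}\in\UU^n$ with $f_i(e^{\bx}\theta^{(i)})=0$. (3) Using the coordinate adaptation from (1): solving $f_i=0$ for fixed absolute values amounts to choosing the argument of the $m_i$-character appropriately (one equation in one $\UU$-coordinate, solvable because $\bx\in\scrA(f_i)$ guarantees the relevant sum of complex numbers can be made to vanish --- this is the same "closed polygon" / triangle-inequality reasoning as in Example~\ref{Ex:lineInPlane}), and this choice does not disturb $f_j$ for $j\neq i$ because $f_j$ is constant in the $m_i$-direction. (4) Since the $r$ adjustments are in independent directions and non-interfering, assemble them into a single $\theta\in\UU^n$ with $f_i(e^{\bx}\theta)=0$ for all $i$; then $z=e^{\bx}\theta\in V$ (here we need the complete intersection hypothesis to know $\calV(f_1)\cap\dots\cap\calV(f_r)=V$ rather than something larger, or at least that $V$ is a component and the constructed point can be taken on $V$) and $\Log(z)=\bx$, so $\bx\in\scrA(V)$.

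The main obstacle I expect is step (1)/(3): making precise the claim that affine independence of the Newton polytopes allows the $r$ argument-equations to be decoupled. Affine independence means the polytopes, translated to share a common vertex, span independent subspaces; one must turn this combinatorial statement into the algebraic statement that there are monomials $w_1,\dots,w_r$ (characters) so that $f_i$, as a function of $w_i$ with all other $w_j$ and all absolute values fixed, is a genuinely nonconstant (e.g. linear in $w_i$, or at worst a Laurent polynomial with a nonconstant $w_i$-edge) equation whose vanishing can be achieved, while $f_j$ for $j\neq i$ genuinely does not depend on $w_i$. There is a subtlety in that $f_i$ may involve $w_i$ in a complicated way, not linearly; but one only needs that $\bx$ lying in $\scrA(f_i)$ makes $f_i(e^\bx\theta)=0$ solvable in the $w_i$-argument alone --- and this should follow because $\bx\in\scrA(f_i)$ already gives a full solution $\theta^{(i)}$, from which one reads off the needed $w_i$-argument, the other coordinates being irrelevant to $f_i$ by construction. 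A secondary point to handle carefully is the reduction by monomial changes of coordinates and monomial multiplication, and checking the degenerate cases (some $f_i$ a binomial, $\scrA(f_i)$ then a rational affine hyperplane) are consistent --- but these are routine once the decoupling is set up. Finally, one should note the corollary that affine subtori, being defined by $r$ independent binomials with (affinely independent, since one-dimensional and independent) Newton segments, fall under this theorem, as remarked before the statement.
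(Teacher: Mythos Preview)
Your overall strategy---use affine independence of the Newton polytopes to decouple the $r$ equations and then assemble a common solution over a given $\bx$---is correct and is essentially the paper's approach as well. The paper packages it more cleanly: after translating each $P(f_i)$ to contain the origin, it lets $M_i\subset M$ be the saturated sublattice spanned by $P(f_i)$, chooses generators $x^{(i)}_1,\dots,x^{(i)}_{a_i}$ for each $M_i$, and considers the group homomorphism
\[
  \varphi\colon (\CC^\times)^n\longrightarrow (\CC^\times)^{a_1}\times\dotsb\times(\CC^\times)^{a_r}
\]
given by all these characters. Affine independence makes $\varphi$ surjective, and since $f_i=\overline{f_i}(x^{(i)})$ depends only on the $i$th block one has $V=\varphi^{-1}\bigl(\calV(\overline{f_1})\times\dotsb\times\calV(\overline{f_r})\bigr)$. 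Applying $\Log$ gives a surjective linear map $\Phi$, and the amoeba identity follows from $\scrA(V)=\Phi^{-1}\bigl(\scrA(\overline{f_1})\times\dotsb\times\scrA(\overline{f_r})\bigr)$ together with $\scrA(f_i)=\Phi^{-1}(\RR^{a_1}\times\dotsb\times\scrA(\overline{f_i})\times\dotsb\times\RR^{a_r})$.

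The one genuine imprecision in your write-up is the repeated suggestion that a \emph{single} character $w_i$ (or direction $m_i$) per equation suffices: ``solving $f_i=0$ for fixed absolute values amounts to choosing the argument of the $m_i$-character appropriately.'' This is not right when $\dim M_i>1$: for instance with $f_1=x_1+x_2+1$ you cannot solve $f_1=0$ over a prescribed $\bx$ by adjusting only $\arg(x_1)$ while holding $\arg(x_2)$ at an arbitrary preassigned value. What you actually need---and what your parenthetical ``the other coordinates being irrelevant to $f_i$ by construction'' is reaching for---is that $f_i$ depends only on the full block $M_i$ of characters, and the blocks $M_1,\dots,M_r$ are independent. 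Then from $\theta^{(i)}$ you read off the entire $M_i$-part of the argument (not just one coordinate), and independence of the blocks lets you glue. Once you replace ``one character $w_i$'' by ``the $M_i$-block of characters,'' your steps (2)--(4) go through and are exactly the fiberwise unpacking of the paper's $\varphi$-argument. Note also that the complete intersection (dimension) hypothesis is not actually used in the amoeba identity; only affine independence of the Newton polytopes enters the proof.
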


\begin{proof}
 For each $i=1,\dotsc,r$, multiply $f_i$ by a monomial so that its Newton polytope $P(f_i)$ contains the origin.
 Let \defcolor{$M_i$} be the saturated sublattice of the character lattice $M$ spanned by $P(f_i)$
 and suppose that  $a_i$ is the rank of $M_i$.
 Let \defcolor{$x^{(i)}_1,\dotsc,x^{(i)}_{a_i}$} be independent characters that generate $M_i$.
 Then $f_i$ is a Laurent polynomial in these characters.
 That is, there is a Laurent polynomial \defcolor{$\overline{f_i}$} in $a_i$ variables
 \defcolor{$y^{(i)}_1,\dotsc,y^{(i)}_{a_i}$}, such that for 
 $x\in(\CC^\times)^n$, we have $f_i(x)=\overline{f_i}(x^{(i)}_1,\dotsc,x^{(i)}_{a_i})$.
 Write \defcolor{$\bx^{(i)}$} for this list $(x^{(i)}_1,\dotsc,x^{(i)}_{a_i})$ of characters.

 Consider the map 
\[
   \defcolor{\varphi}\ \colon\ (\CC^\times)^n\ \longrightarrow\ 
    (\CC^\times)^{a_1}\times (\CC^\times)^{a_2}\times \dotsb  \times(\CC^\times)^{a_r}
\]
 defined by $x\mapsto(\bx^{(1)},\bx^{(2)},\dotsc,\bx^{(r)})$.
 As  $f_i(x)=\overline{f_i}(x^{(i)}_1,\dotsc,x^{(i)}_{a_i})$, we observe that 
 \begin{equation}\label{Eq:PullBackProduct}
   V\ =\ \varphi^{-1}(\calV(\overline{f_1})\times\dotsb\times\calV(\overline{f_r}))\,,
 \end{equation}
 the product of the hypersurfaces
 $\calV(\overline{f_i})$ in $(\CC^\times)^{a_i}$ for $i=1,\dotsc,r$.
 
 By our assumption on the Newton polytopes, these characters 
 $\bx^{(1)},\bx^{(2)},\dotsc,\bx^{(r)}$ are independent.
 As the ideal of the image of $\varphi$ is generated by binomials which arise from integer linear relations among these
 characters~\cite[Ch.~4]{GBCP}, $\varphi$ is surjective.

 Under the map $\Log$, $\varphi$ induces a surjective linear map
\[
   \defcolor{\Phi}\ \colon\ \RR^n\ \longtwoheadrightarrow\ \RR^{a_1}\times \RR^{a_2}\times \dotsb \times \RR^{a_r}\,.
\]
 We claim that $\scrA(V)=\Phi^{-1}(\scrA(\overline{f_1})\times\dotsb\times\scrA(\overline{f_r}))$.
 That $\scrA(V)$ is contained in the inverse image of the product of hypersurface amoebas is a consequence
 of~\eqref{Eq:PullBackProduct}.
 For the other direction, suppose that $z=(z^{(1)},\dotsc,z^{(r)})$ with $z^{(i)}\in\RR^{a_i}$ for $i=1,\dotsc,r$ is a
 point in the product of the hypersurface amoebas. 
 For each $i=1,\dotsc,r$ let $\defcolor{y^{(i)}}\in(\CC^\times)^{a_i}$  a point in the hypersurface $\calV(\overline{f_i})$.
 Then $\defcolor{y}:=(y^{(1)},\dotsc,y^{(r)})$ lies in the product of the hypersurfaces.

 As $\varphi$ is surjective, there is a point $x\in(\CC^\times)^n$ with $\varphi(x)=y$.
 By~\eqref{Eq:PullBackProduct},  $\varphi^{-1}(y)\subset V$.  
 Applying $\Log$ shows that $\Phi^{-1}(z)\subset\scrA(V)$.
 This implies the other containment, so that 
 $\scrA(V) = \Phi^{-1} \bigl(\scrA(\overline{f_1})\times\dotsb\times\scrA(\overline{f_r}) \bigr)$.
 Since 
\[
   \scrA(f_i)\ =\ 
    \Phi^{-1}(\RR^{a_1}\times\dotsb\times\RR^{a_{i-1}}\times\scrA(f_i)\times\RR^{a_{i+1}}\times\dotsb\times\RR^{a_r})\,,
\]
 the conclusion of the theorem holds.
\end{proof}

The original version of this paper conjectured that finite unions of independent complete intersections were the only varieties with a
finite amoeba basis.
Goucha and Gouveia~\cite{GG} showed this was too optimistic.

\begin{proposition}[Corollary 6.3 of~\cite{GG}]
  For positive integers  $m,n$, let $\defcolor{M_{m,n}}=(\CC^\times)^{mn}$ be the set of $m\times n$ matrices with nonzero complex entries.
  The set $X_{m,n}\subset M_{m,n}$ of matrices of rank strictly less than $\min\{m,n\}$ has a finite amoeba basis given by the maximal
  minors of matrices in $M_{m,n}$.  
\end{proposition}

While the set $X_{2,n}$ is a subtorus of $(\CC^\times)^{2n}$, and thus an independent complete intersection, when $3\leq m<n$, $X_{m,n}$ is
not a set-theoretic complete intersection of any subset of maximal minors.
For example, $X_{3,4}$ is one of two ten-dimensional components of the variety defined by any two maximal minors of the generic $3\times 4$
matrix $M_{3,4}$.

Irreducible nondegenerate independent complete intersections in $(\CC^\times)^n$ have amoebas of dimension $n$, and the amoeba of $X_{m,n}$
is also full-dimensional in $\RR^{mn}$.
This is no coincidence, as we show in the following result.

\begin{theorem}\label{T:noFAB}
  A nondegenerate irreducible variety $V\subset(\CC^\times)^n$ with an amoeba of dimension
  less than $n$ does not have a finite amoeba basis.
\end{theorem}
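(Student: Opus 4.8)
The plan is to argue by contradiction: suppose $V$ is nondegenerate, irreducible, has $\dim_\RR\scrA(V) = d < n$, and that $\scrA(V) = \scrA(f_1)\cap\dotsb\cap\scrA(f_r)$ for Laurent polynomials $f_1,\dotsc,f_r$. The strategy is to exploit the convexity of complements of hypersurface amoebas: each $\RR^n\smallsetminus\scrA(f_i)$ has convex components, and $\RR^n\smallsetminus\scrA(V) = \bigcup_i (\RR^n\smallsetminus\scrA(f_i))$ is the union of all these convex sets. Since $\scrA(V)$ has dimension $d<n$, its complement is a dense open set; I would look at a point $x$ in the relative interior of $\scrA(V)$ and a small ball $B$ around it. For each $i$, $B\cap(\RR^n\smallsetminus\scrA(f_i))$ is an open subset of a convex set, hence $B\smallsetminus\scrA(f_i)$ has at most two components locally (it is $B$ minus a portion of a convex hypersurface boundary); since finitely many such sets cover $B\smallsetminus\scrA(V)$ and $\scrA(V)\cap B$ is a codimension-$\geq 1$ analytic set, one deduces that near $x$, $\scrA(V)$ must locally look like a union of finitely many pieces each of which is (a piece of) the boundary of a convex set.

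The key structural step is then to show that this local-convexity forces $\scrA(V)$ to lie in a proper rational affine subspace, contradicting nondegeneracy (by the equivalence quoted in Section~\ref{S:background}, that $V$ is nondegenerate iff $\scrA(V)$ lies in no proper rational affine subspace). Here I would use the logarithmic limit set $\scrL^\infty(V)$: it is the intersection of $S^{n-1}$ with the tropical variety of $V$, a rational polyhedral fan of pure dimension $\dim V$. Because $\scrA(V)$ is covered by boundaries of finitely many convex sets $C_i := \RR^n\smallsetminus\scrA(f_i)$, and each $\partial C_i$ has recession cone structure governed by $\scrL^\infty(\calV(f_i))$, I would argue that $\scrL^\infty(V)$ is contained in the union of the $\scrL^\infty(\calV(f_i))$ in a controlled way, and then that the affine span of $\scrA(V)$ is cut out by the linear conditions forced along each convex piece. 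The cleanest route may instead be direct: cover $\scrA(V)$ by finitely many relatively open pieces on which it coincides with part of some $\partial C_i$; on such a piece $\scrA(V)$ is a real-analytic hypersurface of the affine subspace it spans, and being a piece of the boundary of a convex body it is locally either flat or strictly convex; a non-flat piece would have a supporting hyperplane structure incompatible with $\scrA(V)$ being an amoeba of dimension $<n$ of an irreducible variety (one can test this against coordinate projections to dimension $d+1$ where $V$ projects to a hypersurface, invoking the argument in the proof of the preceding theorem). I would run the projection argument: choose enough surjections $\varphi\colon(\CC^\times)^n\to(\CC^\times)^{d+1}$ so that $\varphi(V)$ is a hypersurface; then $\scrA(\varphi(V)) = \Phi(\scrA(V))$ is a $d$-dimensional subset of $\RR^{d+1}$ whose complement has convex components, forcing $\scrA(\varphi(V))$ to be a rational affine hyperplane, hence $\varphi(V)$ a coset of a subtorus; intersecting over sufficiently many $\varphi$ shows $V$ is degenerate.

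Concretely, the proof would proceed in these steps. First, record that $\scrA(V) = \bigcup_i(\RR^n\smallsetminus\scrA(f_i))^c$ and that, since $\dim_\RR\scrA(V)=d<n$, the complement $\RR^n\smallsetminus\scrA(V)$ is open and dense; hence $\scrA(V)$ is nowhere dense and equals the union over $i$ of $\partial C_i\cap\scrA(V)$ where $C_i$ is a union of convex components of $\RR^n\smallsetminus\scrA(f_i)$. Second, fix a smooth point $x$ of the analytic set $\scrA(V)$ lying on exactly one piece, say $\partial C_{i_0}$, and conclude that locally near $x$, $\scrA(V)$ is a $d$-dimensional submanifold contained in the boundary of the convex set $C_{i_0}$; therefore locally $\scrA(V)$ lies in the affine span $A_x$ of this piece, and $d = \dim A_x - ?$ forces — when $d = n-1$ would give a hyperplane directly, and for smaller $d$ one reduces via projection. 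Third, apply the projection argument of the previous theorem verbatim: for each surjection $\varphi\colon(\CC^\times)^n\twoheadrightarrow(\CC^\times)^{d+1}$ with $\varphi(V)$ a hypersurface, $\scrA(\varphi(V))=\Phi(\scrA(V))$ is an $(n'{-}1)$-dimensional amoeba with convex complement-components hence a rational affine hyperplane, so $\varphi(V)$ is a coset of a codimension-one subtorus; taking enough such $\varphi$ and intersecting, $V$ is contained in a proper coset of a subtorus, i.e.\ $V$ is degenerate — contradiction. Fourth, conclude that no finite amoeba basis exists.

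The main obstacle is the second step: showing that the finite covering of $\scrA(V)$ by boundary-pieces of convex sets, combined with $\dim_\RR\scrA(V) = \dim V = d$ when $d = \min\{n,2\dim V\}$... wait — here $\dim\scrA(V)$ need only be $< n$, not minimal, so one cannot assume $\scrA(V)$ is flat outright. The delicate point is ruling out that $\scrA(V)$, while of dimension $d < n$, is a genuinely curved (strictly convex) piece of some $\partial C_i$: this is exactly what the projection-to-hypersurface argument handles, since a curved $\scrA(V)$ would project onto a curved $d$-dimensional amoeba in $\RR^{d+1}$, but there the complement-convexity theorem $\cite[\textrm{Cor.~6.1.6}]{GKZ}$ together with the rationality of the logarithmic limit set forces flatness. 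So the real content is checking that "enough" projections $\varphi$ with $\varphi(V)$ a hypersurface exist and that their common zero locus is a proper coset — a genericity and dimension-count argument that should be routine given that $V$ is irreducible of dimension $d < n$. I would expect the write-up to be short once the projection reduction is set up carefully.
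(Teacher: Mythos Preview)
Your projection argument has a genuine gap: you conflate $d=\dim_\RR\scrA(V)$ with $\dim_\CC V$. For $\varphi(V)$ to be a hypersurface in $(\CC^\times)^{d+1}$ you need $\dim_\CC\varphi(V)=d$, hence $\dim_\CC V\geq d$. But the minimal-dimension theorem you invoke earlier says $\dim_\RR\scrA(V)=\dim_\CC V$ forces $V$ to be an affine subtorus, which is degenerate when $d<n$; so for a \emph{nondegenerate} $V$ we always have $\dim_\CC V<d$ strictly, and no such projection exists. (Concretely: a nondegenerate line $\ell\subset(\CC^\times)^3$ has $d=2$, $n=3$; projecting to $(\CC^\times)^{d+1}=(\CC^\times)^3$ is the identity and $\ell$ is not a hypersurface, while projecting to $(\CC^\times)^2$ gives a plane line whose amoeba is full-dimensional.) Your step~3 thus proves nothing, and since it is the only place the finite amoeba basis hypothesis is meant to bite, the argument collapses.

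The paper's proof instead extracts local \emph{flatness} of $\scrA(V)$ directly from the finite amoeba basis. At a generic smooth point $x$, one has $T_x\scrA(V)\subset T_x\partial\scrA(f_i)$ for each $i$ with $x\in\partial\scrA(f_i)$; because the complement components of $\scrA(f_i)$ are convex, $T_x\partial\scrA(f_i)$ is a supporting hyperplane to them, so a neighborhood of $x$ in that hyperplane --- and a fortiori in $T_x\scrA(V)$ --- lies inside $\scrA(f_i)$. Intersecting over $i$ shows a neighborhood of $x$ in the tangent space lies in $\scrA(V)$ itself: the amoeba is locally an open piece of an affine $d$-plane. A separate lemma (using semi-algebraicity of the algebraic amoeba) then shows this plane must be rational, whence $V$ sits in a $d$-dimensional affine subtorus. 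You gestured toward the ``flat versus strictly convex'' dichotomy but tried to resolve it by projection; the missing idea is precisely this supporting-hyperplane step, which uses the finite basis in an essential way.
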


For example, a nondegenerate irreducible curve in $(\CC^\times)^n$ for $n\geq 3$ does not have a finite amoeba basis.
Theorem~\ref{T:noFAB} is a consequence of the following lemma.

\begin{lemma}\label{L:noFAB}
  Let $V\subset(\CC^\times)^n$ be a variety with amoeba of dimension $d<n$.
  If $V$ has a finite amoeba basis, then each component of $V$ lies in an affine subtorus of dimension at most $d$.
\end{lemma}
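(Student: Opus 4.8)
The plan is to reduce to a statement about logarithmic limit sets (tropical varieties), where the hypothesis of a finite amoeba basis translates into a containment of fans that forces degeneracy. Suppose $\scrA(V) = \scrA(f_1)\cap\dotsb\cap\scrA(f_r)$, so that $\dim_\RR\scrA(V) = d < n$. Passing to asymptotic directions, one gets
\[
  \scrL^\infty(V)\ =\ \scrL^\infty(\calV(f_1))\cap\dotsb\cap\scrL^\infty(\calV(f_r))\,,
\]
since the logarithmic limit set of an intersection of amoebas is the intersection of the logarithmic limit sets (here I would need the inclusion $\scrL^\infty(X\cap Y)\subseteq\scrL^\infty(X)\cap\scrL^\infty(Y)$ for amoebas given as intersections, which follows from the definition of accumulation directions, together with the reverse inclusion, which is the content that a finite amoeba basis gives). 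By Bergman and Bieri--Groves, $\scrL^\infty(\calV(f_i))$ is a rational polyhedral fan of pure dimension $n-1$, and $\scrL^\infty(V)$ is a rational polyhedral fan of pure dimension $\dim V$; but since $\dim_\RR\scrA(V)=d$, the span of $\scrL^\infty(V)$ is contained in a $d$-dimensional subspace of $\RR^n$.

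The key step is then: an intersection of finitely many central hyperplane-type fans (each $\scrL^\infty(\calV(f_i))$ spans all of $\RR^n$, being pure of codimension one) cannot span less than $n$ dimensions unless the individual $f_i$ are already constrained. More precisely, I would argue directly on the level of the ideal. Let $a\TT$ be the smallest affine subtorus containing the component $V'$ of $V$ under consideration; its amoeba $\Log(a)\Pi_\RR$ is a rational affine subspace, and by the remark in Section~\ref{S:background} that a variety is degenerate iff its amoeba lies in a proper rational affine subspace, $\dim\TT$ equals the dimension of the smallest rational affine subspace containing $\scrA(V')$. The goal is to show $\dim\TT\le d$. Since $\scrA(V)\subseteq\scrA(V')$ only in one direction, I should instead work componentwise from the start: a finite amoeba basis for $V$ restricts to give control on each component, because $\scrA(V)=\bigcup_{V'}\scrA(V')$ and the complement of $\scrA(f_i)$ being a union of convex sets pins down how each hypersurface amoeba must contain each $\scrA(V')$.

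The concrete mechanism I would use: after translating, assume $V'$ is nondegenerate in the subtorus $\TT_0$ it spans, so $\dim\TT_0 = m$ and $\scrA(V')$ spans an $m$-dimensional rational subspace $\Pi_\RR\subset\RR^n$ but $\dim_\RR\scrA(V')\le d$. If $m > d$, then $\scrA(V')$ is a proper-dimensional (dimension $\le d$) subset of $\Pi_\RR\cong\RR^m$ that is nondegenerate there, i.e.\ not contained in any proper rational affine subspace of $\RR^m$. Now each $\scrA(f_i)$ must contain $\scrA(V)$ hence, for a suitable choice, a piece meeting $\scrA(V')$; intersecting all of them recovers $\scrA(V)$ which near generic points of $\scrA(V')$ has dimension $<m$. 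But each $\RR^n\smallsetminus\scrA(f_i)$ has convex components, and a finite intersection of hypersurface amoebas whose intersection has dimension $<m$ yet which meets the relative interior of the convex-complement structure forces, via the support-hyperplane argument already used in the proof of the hypersurface case of the dimension theorem, that $\scrA(V')$ lies in a finite union of affine hyperplanes; nondegeneracy in $\RR^m$ then yields a contradiction once $m>d$, because a nondegenerate set in $\RR^m$ of dimension $\le d<m$ cannot be cut out near a generic point as such an intersection without lowering its spanning dimension. Hence $m\le d$, which is exactly the claim that each component lies in an affine subtorus of dimension at most $d$.

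\textbf{Main obstacle.} The delicate point is the passage from ``$\scrA(V)$ is a finite intersection of hypersurface amoebas of dimension $<n$'' to a genuine constraint forcing each component into a low-dimensional affine subtorus; the naive dimension count is not enough, since generic finite intersections of codimension-one amoebas have codimension one, not the expected higher codimension. I expect to need the convexity of complement components (\cite[Cor.~6.1.6]{GKZ}) in an essential way — running the argument from the hypersurface case of the preceding dimension theorem locally, to show that wherever $\scrA(V')$ fails to fill out its spanning subspace, it must in fact be flat (a union of rational affine hyperplanes within that subspace), and then iterating to collapse the spanning dimension down to $d$. Making this local-to-global flattening argument precise, and handling reducible $V$ so that the basis controls every component simultaneously, is where the real work lies.
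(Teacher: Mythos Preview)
Your tropical opening does not work. The equality
\[
  \scrL^\infty(V)\ =\ \scrL^\infty(\calV(f_1))\cap\dotsb\cap\scrL^\infty(\calV(f_r))
\]
fails in general: only the inclusion $\subseteq$ comes for free from $\scrA(V)\subset\scrA(f_i)$, and the reverse inclusion is exactly what you would need but do not have. A direction can be asymptotic to every $\scrA(f_i)$ without being asymptotic to their intersection (two hypersurface amoebas can meet in a bounded set). So nothing about the tropical fan of $V$ is forced by the finite amoeba basis, and this paragraph should be discarded.

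The second half of your proposal is on the right track but stops short of the two steps that carry the argument. The paper proceeds exactly through the convexity mechanism you allude to, but makes it precise as follows. First, since $\scrA(V)$ has empty interior, every point lies on some $\partial\scrA(f_i)$; a semi-algebraic stratification produces a dense set of points $x$ at which $\scrA(V)$ is smooth, the index set $I(x)=\{i: x\in\partial\scrA(f_i)\}$ is locally constant, and each $\partial\scrA(f_i)$ for $i\in I(x)$ is smooth at $x$. At such $x$, convexity of the complement components makes $T_x\partial\scrA(f_i)$ a supporting hyperplane, so a neighborhood of $x$ in that tangent hyperplane lies in $\scrA(f_i)$. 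Intersecting over $i\in I(x)$ shows that a neighborhood of $x$ in $T_x\scrA(V)$ already lies in $\scrA(V)$: the amoeba is \emph{locally an open piece of an affine $d$-plane}. You name this ``flattening'' as the obstacle but do not supply it; this is the core of the proof, and your sketch (``forces\dots a finite union of affine hyperplanes; nondegeneracy then yields a contradiction'') is not the right mechanism and would not close.

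The second missing step is passing from local flatness to an affine subtorus. It is not automatic that the affine plane $T_x\scrA(V)$ is \emph{rational}; the paper proves this separately (Lemma~\ref{L:almost_linear}) by exponentiating and using that the algebraic amoeba is semi-algebraic, which forces the exponents in the parametrization to be rational. Only then does one obtain a genuine subtorus $\TT$ with $V'\subset a\TT$. Your appeal to ``the smallest affine subtorus containing $V'$'' presupposes what must be shown, namely that this subtorus has dimension $\le d$; the rationality argument is precisely what bounds it.
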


Our proof of Lemma~\ref{L:noFAB} uses the convexity of each component of the complement of a hypersurface amoeba, along with stratifications
of amoebas induced by stratifications of the corresponding algebraic amoebas given by their structure as semi-algebraic sets.
Let us first recall some results about semi-algebraic sets, these are discussed in Chapters 2 and 9 of~\cite{BCR}.

The intersection or union of finitely many semi-algebraic sets is semi-algebraic.
A semi-algebraic set $X\subset\RR^n$ has a decomposition into pairwise disjoint semi-algebraic subsets,
 \begin{equation}\label{Eq:SAD}
   X\ =\ X_0 \;\sqcup\; X_1 \;\sqcup\; \dotsb \;\sqcup\; X_n\,,
 \end{equation}
where each $X_i$ is a locally closed $C^\infty$ manifold of dimension $i$.
The maximum $d$ with $X_d\neq\emptyset$ is the dimension of $X$. 
This decomposition may be chosen so that each connected component of $X_i$ is homeomorphic to the product $(0,1)^i$ of open intervals,
which we call a \demph{box}.

The local dimension \defcolor{$\dim_xX$} of a point $x\in X$ is the maximum $i$ such that $x\in\overline{X_i}$.
Let \defcolor{$X_{\rm sm}$} be the (open) subset consisting of points $x\in X$ with a neighborhood in $X$ that is a manifold.
It is the set of smooth points of $X$ and its complement $\defcolor{X_{\rm sg}}:=X\smallsetminus X_{\rm sm}$ is the singular locus of $X$.
We have  $\overline{X_{\rm sm}}=X$  and $\dim X_{\rm sg}<\dim X$.
Both $X_{\rm sm}$ and $X_{\rm sg}$ are semi-algebraic, and we may assume that (the connected components of) any decomposition of $X$ into
semi-algebraic sets refines the decomposition $X=X_{\rm sm}\sqcup X_{\rm sg}$.

For $x\in\RR^n$ and $\epsilon>0$, let \defcolor{$B(x,\epsilon)$} be the open ball in $\RR^n$ centered at $x$ of radius $\epsilon$.
For a submanifold $M\subset\RR^n$ and $x\in M$, let $T_xM$ be the tangent space of $M$ at $x$, which we regard as an affine subspace of
$\RR^n$ containing $x$. 

\begin{proof}[Proof of Lemma~\ref{L:noFAB}]
  Suppose that $V$ has a finite amoeba basis given by Laurent polynomials $f_1,\dotsc,f_r$.
  Then 
 \begin{equation}\label{Eq:FiniteIntersection}
   \scrA(V)\ =\ \scrA(f_1)\;\cap\;\scrA(f_2)\;\cap\;\dotsb\cap\;\scrA(f_r)\,.
 \end{equation}
For each $i$, the decomposition~\eqref{Eq:SAD} of the algebraic amoeba $|\calV(f_i)|$ induces a decomposition
of the hypersurface amoeba $\scrA(f_i)$, 
 \begin{equation}\label{Eq:AmoebaDecomp}
     \scrA(f_i)\ =\ \scrA(f_i)_0\;\sqcup\;\scrA(f_i)_1\;\sqcup\;\dotsb\;\sqcup\;\scrA(f_i)_n\,,
 \end{equation}
where $\scrA(f_i)_j$ a locally closed $C^\infty$ manifold of dimension $j$.

For a smooth point $x\in\scrA(V)_{\rm sm}$, let $\defcolor{J_x}=(j_1,\dotsc,j_r)$ be defined by where $x$ lies in the
decompositions~\eqref{Eq:AmoebaDecomp}, that is, $x\in\scrA(f_i)_{j_i}$ for $i=1,\dotsc,r$.
For a list $J=(j_1,\dotsc,j_r)$, let
\[
   \defcolor{U_J}\ :=\ 
   \{x\in\scrA(V)_{\rm sm} \mid J_x=J\}\ =\ 
   \scrA(V)_{\rm sm}\cap\bigcap_{i=1}^r \scrA(f_i)_{j_i}\ .
\]
This is the image of a semi-algebraic subset of $|V|$, and may be further decomposed into boxes.
The collection \defcolor{$\calU$} of these for all $J$ give a finite decomposition of $\scrA(V)_{\rm sm}$ into boxes.

Let $y\in\scrA(V)_{\rm sm}$.
Then there is a box $U\in \calU$ of dimension $\dim_y\scrA(V)$ with $y\in\overline{U}$.
Let $x\in U$ and set $J:=J_x$, so that $U\subset U_J$.
Let $i\in\{1,\dotsc,r\}$.
Let \defcolor{$C$} be a connected component of $\RR^n\smallsetminus\scrA(f_i)$.
This is an open convex subset of $\RR^n$ disjoint from the manifold $\scrA(f_i)_{j_i}$.
As $x\in\scrA(f_i)_{j_i}$, we claim that there is some $\epsilon>0$ such that $B(x,\epsilon)\cap T_x\scrA(f_i)_{j_i}$ is disjoint from
$C$.
Indeed, if $x\not\in\overline{C}$, then there is a ball $B(x,\epsilon)$ disjoint from $\overline{C}$, and hence $C$.
If $x\in\overline{C}$, then the manifold $\scrA(f_i)_{j_i}$  lies in the boundary of $C$.
Let $H$ be a supporting hyperplane to $C$ through $x$.
Then $C$, and hence $\scrA(f_i)_{j_i}$, lies on one side of $H$, which implies that $H$ contains the tangent space
$T_x\scrA(f_i)_{j_i}$.
As $H$ is disjoint from the open convex set $C$, we may chose any $\epsilon>0$ in this case.

As there are only finitely many components in the complement of $\scrA(f_i)$, we may assume that $\epsilon$ has been chosen so that
$B(x,\epsilon)\cap T_x\scrA(f_i)_{j_i}$ is disjoint from every component, that is, 
 \begin{equation}\label{Eq:dsj}
   B(x,\epsilon)\cap T_x\scrA(f_i)_{j_i} \ \subset\ \scrA(f_i)\,.
 \end{equation}

Shrinking $\epsilon$ if necessary, we may assume that~\eqref{Eq:dsj} holds for all $i=1,\dotsc,r$, and also that
$B(x,\epsilon)\cap\scrA(V)\subset U$ (as $U$ is disjoint from the closed set $\scrA(V)\smallsetminus U$).
The finite intersection~\eqref{Eq:FiniteIntersection} implies that
\[
   B(x,\epsilon)\cap\scrA(V)\ =\
   B(x,\epsilon)\cap \scrA(f_1)\;\cap\;\scrA(f_2)\;\cap\;\dotsb\cap\;\scrA(f_r)\ \subset\ U\,.
\]
As for each $i=1,\dotsc,r$, $U\subset\scrA(f_i)_{j_i}$, we have the inclusion of tangent spaces
$T_xU\subset T_x\scrA(f_i)_{j_i}$ and thus
\[
    B(x,\epsilon)\cap T_xU\ \subset\  B(x,\epsilon)\cap T_x\scrA(f_i)_{j_i}\ \subset\
    B(x,\epsilon)\cap \scrA(f_i)\,.
\]    
This implies that $B(x,\epsilon)\cap U=B(x,\epsilon)\cap T_xU$ as both are manifolds of the same dimension that contain $x$.
Thus $U$ is an open subset of $T_xU$.
As $y\in\overline{U}$ is a smooth point of $\scrA(V)$, $y$ has a neighborhood in  $\scrA(V)$ that is an open subset of the affine space
$T_xU$.

Let $W\subset V$ be an irreducible component.
Suppose that $\scrA(W)$ is not a subset of the closure of the difference $\scrA(V)\smallsetminus\scrA(W)$.
If $y$ is a smooth point of  $\scrA(W)$  not lying in this closure, then $y$ is a smooth point of $\scrA(V)$.
We have just shown that $y$ has a neighborhood in $\scrA(W)$ that is an open subset of an affine space.
By Lemma~\ref{L:almost_linear} below, $W$ lies in an affine subtorus of dimension equal to
$\dim_y\scrA(W)\leq\dim\scrA(V)=d<n$.

Let $U$ be the union of all components $W$ of $V$ such that $\scrA(W)$ is not a subset of the closure of the difference
$\scrA(V)\smallsetminus\scrA(W)$.
Then the $\scrA(U)=\scrA(V)$ and every irreducible component of $U$ lies in an affine subtorus of dimension at most $d=\dim\scrA(V)$.

Let $W$ be a component of $V$ that is not a component of $U$.
Since $\scrA(W)\subset\scrA(V)$, it lies in a union of
finitely many rational affine subspaces of dimension at most $\dim\scrA(V)$ 
The irreducibility of $W$ implies that $\scrA(W)$ lies in one such rational affine subspace and 
 by Lemma~\ref{L:Affine=degenerate}, $W$ lies in an affine subtorus of dimension at most $d<n$.
\end{proof}

\begin{lemma}\label{L:almost_linear}
 Let $V\subset(\CC^\times)^n$ be an irreducible subvariety such that $\scrA(V)$ has a point $x$ having a neighborhood in 
 $\scrA(V)$ that is an open subset of a $d$-dimensional plane.  
 Then $V$ lies in a $d$-dimensional affine subtorus.
\end{lemma}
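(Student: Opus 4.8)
The plan is to reduce the statement to an analytic-continuation argument about the complexified tangent space, using the fact that a piece of the amoeba being a flat $d$-plane forces strong constraints on the Gauss map of $V$. Concretely, I would first pick a smooth point $v\in V$ with $\Log(v)=x$ lying over the flat piece, and examine the differential $d_v\Log$. Since $\scrA(V)$ is locally a $d$-plane $\Pi_\RR+x$ near $x$, the image of $d_v\Log$ is contained in the linear space $L:=\Pi_\RR$ for all $v$ in a neighborhood $U$ of $v$ inside $V$ (after shrinking so $\Log(U)$ lies in the flat piece). Writing $T_v V\subset\CC^n\cong T_v(\CC^\times)^n$ (using logarithmic coordinates, so that the torus acts by translation and $d\Log$ is just the real-part projection $\mathrm{Re}\colon\CC^n\to\RR^n$), the condition is $\mathrm{Re}(T_vV)\subseteq L$ for every $v\in U$. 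Because $T_vV$ is a complex subspace, it is closed under multiplication by $\sqrt{-1}$, so $\mathrm{Re}(T_vV)\subseteq L$ is equivalent to $T_vV\subseteq L_\CC := L\otimes_\RR\CC = L\oplus\sqrt{-1}L$, the complexification viewed as a complex-linear subspace of $\CC^n$. Hence $T_vV\subseteq L_\CC$ for all $v$ in a Zariski-dense (in fact Euclidean-open) subset of $V$, and by irreducibility and continuity of the Gauss map this holds at every smooth point of $V$.

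Next I would translate "$T_vV\subseteq L_\CC$ everywhere" into "$V$ lies in a translate of the subtorus with Lie algebra $L_\CC$." Since $L=\Pi_\RR$ for a saturated sublattice $\Pi\subset\ZZ^n$ of rank $d$ (the amoeba of $V$ sits inside the rational affine space $\Pi_\RR+x$; rationality of $L$ should be argued from the logarithmic limit set / Bergman fan as in the proof of the theorem before Example~\ref{Ex:real_line}, or simply taken as hypothesis-free because a $d$-plane that is the amoeba of an algebraic variety has rational span by Bieri--Groves), the subspace $L_\CC\subset\CC^n$ in logarithmic coordinates is exactly the Lie algebra of the subtorus $\TT=\Pi\otimes_\ZZ\CC^\times$. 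A connected complex submanifold of $(\CC^\times)^n$ all of whose tangent spaces are the constant subspace $\mathrm{Lie}(\TT)$ is an integral manifold of the (integrable, since constant) distribution given by $\mathrm{Lie}(\TT)$; the leaves of that distribution are precisely the cosets $a\TT$. Therefore the smooth locus of $V$ lies in a single coset $a\TT$, and taking closures (and using irreducibility of $V$) gives $V\subseteq a\TT$, a $d$-dimensional affine subtorus.

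I would organize the write-up as: (1) set up logarithmic coordinates and record that $d\Log=\mathrm{Re}$; (2) the local flatness hypothesis gives $\mathrm{Re}(T_vV)\subseteq L$ for $v$ near $x$, hence $T_vV\subseteq L_\CC$ there; (3) propagate this to all of $V$ by irreducibility; (4) identify $L_\CC$ with the Lie algebra of a subtorus $\TT$, using that $L$ is rational of dimension $d$; (5) conclude $V\subseteq a\TT$ by the constant-coefficient Frobenius/foliation argument. The main obstacle I expect is step (2)–(3): making precise that the amoeba being \emph{exactly} a $d$-plane near $x$ (not just contained in one) forces $\mathrm{Re}(T_vV)\subseteq L$ at nearby points and not merely at $x$ itself — one must shrink to a neighborhood where $\Log(V)$ genuinely coincides with the plane and argue the rank of $d_v\Log$ cannot jump out of $L$, e.g. via the implicit function theorem applied to $\Log|_V$ whose image is locally a manifold equal to the plane. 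A secondary subtlety is the rationality of $L$, which I would dispatch by invoking that $L$ must be spanned by the (rational) directions in the tropicalization of $V$, i.e. Bieri--Groves as cited in Section~\ref{S:background}; once $L$ is rational and $d$-dimensional, steps (4) and (5) are routine.
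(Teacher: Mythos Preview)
Your approach is genuinely different from the paper's, and the differential-geometric core (steps (1)--(3) and (5)) is correct. The paper never touches the Gauss map or any foliation. Instead it writes the local $d$-plane as the graph of a linear map $\Lambda\colon\RR^d\to\RR^{n-d}$ over a rational coordinate plane, exponentiates to get a local parametrization of the algebraic amoeba $|V|$ of the form $(z_1,\dots,z_d)\mapsto(z_1,\dots,z_d,z^{\alpha_1},\dots,z^{\alpha_{n-d}})$, and then uses that $|V|$ is \emph{semi-algebraic} to force every exponent $\alpha_{i,j}$ to be rational. This simultaneously proves the rationality of $L$ and names the subtorus $\TT$; the coset containment then follows because the image of $V$ in $(\CC^\times)^n/\TT$ lands in the compact subtorus and is therefore a point. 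Your route is more Lie-theoretic and gives a cleaner conceptual picture of why $V$ sits in a coset; the paper's route buys rationality of $L$ directly from the local data, with no tropical input. (Incidentally, the obstacle you flag at step~(2) is not real: mere containment of the local amoeba in the plane already gives $\mathrm{Re}(T_vV)\subseteq L$; equality is only needed later.)

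The place your argument is thin is step~(4). Bieri--Groves tells you the cone on $\scrL^\infty(V)$ is a rational polyhedral fan of pure dimension $\dim_\CC V$ sitting inside $L$, but $\dim_\CC V$ can be strictly smaller than $d$ (for a nondegenerate line in $(\CC^\times)^2$ one has $d=2$ and $\dim_\CC V=1$), so it is not automatic that this fan spans $L$. The gap can be closed---if the rational span $L_0$ were a proper subspace of $L$, project to $(\CC^\times)^n/\TT_0$, use functoriality of tropicalization under surjective torus homomorphisms to see the image has zero-dimensional tropical variety and is therefore a point, and conclude $\scrA(V)$ lies in a translate of $L_0$, contradicting local $d$-dimensionality---but this is a second coset argument plus an appeal to tropical functoriality, not just ``invoking Bieri--Groves.'' The paper's semi-algebraicity trick avoids this detour entirely, which is its main advantage.
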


\begin{proof}
 Translating $V$ (and thus $\scrA(V)$) if necessary, we may assume that  $x=0$.
 There is a rational $d$-dimensional plane $\Pi_\RR$ such that the projection of $T_x\scrA(V)$ to $\Pi_\RR$ is surjective.
 Taking a subtorus complementary to $\Pi\otimes_\ZZ\CC^\times\simeq(\CC^\times)^d$, we have 
 coordinates $(\CC^\times)^n=(\CC^\times)^d\times(\CC^\times)^{n-d}$ and a decomposition of
 $\RR^n=\RR^d\oplus \RR^{n-d}$ into rational linear subspaces such that  $T_x\scrA(V)$ is the graph of a map
 $\Lambda\colon\RR^d\to\RR^{n-d}$ (here, $\Pi_\RR=\RR^d$). 
 That is, points of $T_x\scrA(V)$ are of the form
 \begin{equation}\label{Eq:graph}
   \{(y_1,\dotsc,y_d,\ell_1(y),\dotsc,\ell_{n-d}(y)) \mid y\in\RR^d\}\,,
 \end{equation}
 where $\ell_i$ are the coordinate functions of $\Lambda$, which are linear forms.
 By our assumption, $\scrA(V)$ agrees with $T_x\scrA(V)$ in a neighborhood of $x$, 
 so there is a neighborhood $U$ of the origin in $\RR^d$ such that this set~\eqref{Eq:graph} restricted to $y\in U$
 lies in $\scrA(V)$.  
 
 The exponential map on $\RR^n$ sends $\scrA(V)$ to the algebraic amoeba $\defcolor{|V|}$.
 Thus the set
 \begin{equation}\label{Eq:expGraph}
   \{ (e^{y_1},\dotsc, e^{y_d}\,,\, e^{\ell_1(y)},\dotsc,e^{\ell_{n-d}(y)})
     \mid y\in U\}
 \end{equation}
 is a neighborhood of the point $1$ in $|V|$.
 In particular, it is a semi-algebraic set.

 If we set $\defcolor{z_i}:=e^{y_i}$, then the exponential of a linear form becomes
\[
    e^{\ell_i(y)}\ =\ z_1^{\alpha_{i,1}}\dotsb z_d^{\alpha_{i,d}}\ =:\ \defcolor{z^{\alpha_i}}\,,
\]
 where $\ell_i(y)=\alpha_{i,1}y_1+\dotsb+\alpha_{i,d}y_d=\alpha_i\cdot y$.
 In particular, each monomial $z^{\alpha_i}$ is an algebraic function of $z_1,\dotsc,z_d$.
 This implies that the coefficients/exponents $\alpha_{i,j}$ are rational numbers.
 If we let \defcolor{$\delta$} be their common denominator and set $t_i:=z_i^{1/\delta}$ for $i=1,\dotsc,d$---this is
 well-defined as each $z_i>0$---then we may assume that each $\alpha_{i,j}$ is an integer.

 Let $\TT$ be the $d$-dimensional subtorus of $(\CC^\times)^n$ whose algebraic amoeba is the Zariski closure of the
 set~\eqref{Eq:expGraph}. 
 That is, $\TT$ is defined in $(\CC^\times)^n$ by $x_{n+i}=x_1^{\alpha_{i,1}}\dotsb x_d^{\alpha_{i,d}}$ for each
 $i=1,\dotsc, n{-}d$.
 Then the image of $V$ in $(\CC^\times)^n/\TT$  is contained in the compact subtorus of $(\CC^\times)^n/\TT$, which implies
 that the image of $V$ is a single point as $V$ is irreducible.
 This completes the proof.
\end{proof}

We close with a simple characterization of degenerate varieties.

\begin{lemma}\label{L:Affine=degenerate}
 An irreducible subvariety $V$ of $(\CC^\times)^n$ lies in an affine subtorus of dimension $d$ if and only is its amoeba
 lies in a rational affine subspace of $\RR^n$ of dimension $d$.
\end{lemma}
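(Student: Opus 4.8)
The plan is to prove the two implications of Lemma~\ref{L:Affine=degenerate} separately, noting that one direction is essentially trivial and the substantive direction follows quickly from work already done.

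\medskip

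\emph{The easy direction.} Suppose $V$ lies in an affine subtorus $a\TT$ with $\dim\TT=d$. As recalled in Section~\ref{S:background}, the amoeba of $a\TT$ is $\Log(a)+\Pi_\RR$, the translate by $\Log(a)$ of the rational linear subspace $\Pi_\RR=\Pi\otimes_\ZZ\RR$, where $\Pi\subset N$ is the saturated group of cocharacters of $\TT$. This is an affine subspace of $\RR^n$ of dimension $d$, and since $\Log$ is monotone with respect to inclusion, $\scrA(V)\subseteq\scrA(a\TT)=\Log(a)+\Pi_\RR$. Hence $\scrA(V)$ lies in an affine subspace of dimension $d$. (In fact the smallest such subspace has rational direction, since it is spanned by a subset of $\scrA(V)$, but we do not need this here.)

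\medskip

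\emph{The substantive direction.} Suppose $\scrA(V)$ lies in an affine subspace $A\subset\RR^n$ with $\dim A=d$. First I would reduce to the case where $A$ is \emph{rational}, i.e.\ a translate of a rational linear subspace: the logarithmic limit set $\scrL^\infty(V)$ is, by the Bergman--Bieri--Groves theorem cited in Section~\ref{S:background}, the intersection of $S^{n-1}$ with a rational fan, and it also lies in the linear subspace $A-A$ parallel to $A$; intersecting the (possibly irrational) subspace $A$ with the rational span of $\scrL^\infty(V)$ together with any one point of $\scrA(V)$, one may replace $A$ by a rational affine subspace of dimension at most $d$ still containing $\scrA(V)$. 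Actually, the cleanest route avoids even this: after translating $V$ by an element $a$ with $\Log(a)\in A$ (so $\scrA(aV)=\scrA(V)-\Log(a)$ now lies in the linear subspace $A_0:=A-\Log(a)$), I apply the argument of Lemma~\ref{L:almost_linear}. Although Lemma~\ref{L:almost_linear} is stated for an amoeba that is locally a plane near some point, its proof only uses that $\scrA(V)$ is contained in a fixed $d$-dimensional set on which coordinates can be taken: the key mechanism is that, choosing coordinates $(\CC^\times)^n=(\CC^\times)^d\times(\CC^\times)^{n-d}$ with $\RR^d$ a rational subspace onto which $A_0$ projects isomorphically, the algebraic amoeba $|V|$ lands in the graph of a semi-algebraic section over an open subset of $\RR_{>}^d$; semi-algebraicity forces the defining monomial relations $x_{d+i}=x_1^{\alpha_{i,1}}\cdots x_d^{\alpha_{i,d}}$ to have rational exponents, and after a finite base change these cut out a $d$-dimensional subtorus $\TT$. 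Then the image of $V$ in $(\CC^\times)^n/\TT$ has amoeba a point, hence (as $V$ is irreducible) is a single point, so $V$ lies in a coset of $\TT$, a $d$-dimensional affine subtorus. This completes the proof.

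\medskip

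\emph{Main obstacle.} The delicate point is making the rationality reduction clean, and more precisely extracting from "$\scrA(V)\subseteq A$" the same conclusion that Lemma~\ref{L:almost_linear} draws from "$\scrA(V)$ equals a plane near a point." In the situation here we do not know a priori that $\scrA(V)$ is $d$-dimensional or smooth anywhere; we only know it is trapped in a $d$-dimensional affine subspace. What saves us is that $|V|$ is semi-algebraic (the Proposition in Section~\ref{S:SA}) and of positive-dimensional image in $\RR_{>}^d$ under the coordinate projection near any smooth point of $V$, so there is an open set of $\RR_>^d$ over which $|V|$ is the graph of finitely many semi-algebraic functions, each an algebraic function of $z_1,\dots,z_d$, and these must be Laurent monomials with rational exponents because they also lie in the exponential of a linear subspace. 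I would therefore either invoke Lemma~\ref{L:almost_linear} after first establishing that $\scrA(V)$ contains an open subset of $A$ (using that $\scrA(V)$ is closed and has a local maximal-dimensional piece of dimension exactly $\dim_\RR\scrA(V)\le d$, then noting $\dim A=d$ forces equality and local planarity there), or re-run its proof verbatim in this slightly more general setting. Once that bridge is in place, everything else is bookkeeping about subtori and their amoebas.
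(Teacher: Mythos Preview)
Your easy direction and your final step (pass to the quotient torus $(\CC^\times)^n/\TT$, observe the image has amoeba a point, conclude by irreducibility that $V$ is a single $\TT$-coset) are exactly what the paper does. The paper's entire proof of the substantive direction is that one sentence: it begins by writing the given affine subspace as $a\scrA(\TT)$ for a subtorus $\TT$ and then quotients. In other words, the paper simply \emph{assumes the affine subspace is rational}---consistent with how the lemma is actually invoked (just above, where the subspace is explicitly ``rational affine''), though not literally matching the bare word ``affine'' in the statement.

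Almost all of your work goes into a rationality reduction that the paper does not perform, and that part of your proposal has real gaps. First, the logarithmic-limit-set argument does not do what you claim: knowing $\scrL^\infty(V)\subset A_0\cap S^{n-1}$ and that its rational span $R$ is a rational subspace of $A_0$ does \emph{not} imply $\scrA(V)\subset p+R$; the amoeba can occupy directions in $A_0$ that are not asymptotic. Second, your plan to invoke Lemma~\ref{L:almost_linear} directly by ``first establishing that $\scrA(V)$ contains an open subset of $A$'' is unjustified: nothing forces $\dim_\RR\scrA(V)=d$; if it is strictly smaller, $\scrA(V)$ need not be locally planar of any dimension, and you cannot simply recurse. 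Your fallback---re-running the proof of Lemma~\ref{L:almost_linear} using only that $|V|$ is semi-algebraic and trapped in the exponential of a linear graph---is the right instinct and can be made to work, but it requires a genuine case split (e.g.\ $\dim V=0$ versus the projection of $|V|$ to $\RR_>^d$ containing a semi-algebraic arc on which one argues $z^{\alpha_i}$ is algebraic in $z$ forces $\alpha_i\in\mathbb{Q}$), and you have not supplied it.

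So: your argument is morally headed to the same place as the paper's, via the same quotient step, but you take a long detour to justify rationality that the paper simply sidesteps---and your detour, as written, is incomplete. If you accept (as the paper tacitly does) that the hypothesis means a \emph{rational} affine subspace, your proof collapses to the paper's two lines; if you insist on the general statement, you still owe a clean rationality argument.
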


\begin{proof}
 Suppose that $\scrA(V)$ lies in a proper affine subspace $a\scrA(\TT)$, for a subtorus $\TT$ of $(\CC^\times)^n$. 
 Then the amoeba of the irreducible variety $(\TT\cdot V)/\TT$ is a point, which implies that $\TT\cdot V$  is a single
 orbit of $\TT$.
 Noting that $V\subset a\TT$ implies that $\scrA(V)\subset \Log(a)+\scrA(\TT)$, which is a rational affine subspace of $\RR^n$ 
 of dimension $\dim\TT$, completes the proof. 
\end{proof}

\providecommand{\bysame}{\leavevmode\hbox to3em{\hrulefill}\thinspace}
\providecommand{\MR}{\relax\ifhmode\unskip\space\fi MR }
\providecommand{\MRhref}[2]{%
  \href{http://www.ams.org/mathscinet-getitem?mr=#1}{#2}
}
\providecommand{\href}[2]{#2}

\end{document}